\numberwithin{equation}{section}
\newtheorem{theorem}{Theorem}[section]
\newtheorem{lemma}[theorem]{Lemma}
\newtheorem{proposition}[theorem]{Proposition}
\newtheorem{remark}[theorem]{Remark}
\newcommand{\beq}{\begin{eqnarray*}}
\newcommand{\eeq}{\end{eqnarray*}}
\newcommand{\beqn}{\begin{eqnarray}}
\newcommand{\eeqn}{\end{eqnarray}}
\begin{document}





\title[Ergodicity and Asymptotic Stability]{Ergodicity and Asymptotic Stability of Feller Semigroups on Polish Metric Spaces}

\author[F.-Z. GONG]{Fu-Zhou GONG}
\address{Fu-Zhou GONG, Institute of Applied Mathematics, Academy of Mathematics and Systems Science,
Chinese Academy of Sciences, Beijing 100190, China}
\email{fzgong@amt.ac.cn}

\author[Y. LIU]{Yuan LIU}
\address{Yuan LIU, Institute of Applied Mathematics, Academy of Mathematics and Systems Science,
Chinese Academy of Sciences, Beijing 100190, China}
\email{liuyuan@amss.ac.cn}

\date{\today}

\begin{abstract}
We provide some sharp criteria for studying the ergodicity and asymptotic stability of general Feller semigroups on Polish metric spaces. As application, the 2D Navier-Stokes equations with degenerate stochastic forcing will be simply revisited.
\end{abstract}

\subjclass[2010]{primary 60J05; secondary 37A30}

\keywords{ergodicity, asymptotic stability, Feller semigroup, Polish space, 2D Navier-Stokes equation}

\maketitle

\allowdisplaybreaks

\section{Introduction}
 \label{Intro}

There is a vast literature of studying the ergodicity and asymptotic stability for various semigroups from dynamical systems and Markov chains. Abundant theories and applications have been established for compact or locally compact state spaces. However, it is difficult to extend them freely to infinite dimensional normed spaces or more general Polish metric spaces.

Actually, in the field of stochastic partial differential equations (SPDE for short), the uniqueness of ergodic measure can be derived from the {\em strong Feller} property besides {\em topological irreducibility}, which has been a routine to deal with the equations with non-degenerate additive noise. Recently, the {\em asymptotic strong Feller} property, as a celebrating breakthrough, was presented by Hairer and Mattingly \cite{Hairer} to solve the unique ergodicity for 2D Navier-Stokes equations with degenerate additive noise. They obtained such kind of property due to the gradient estimate (for some $\delta > 0$)
   \beqn
      |\nabla P_t\varphi(x)| \leqslant C(x) \left(||\varphi||_{\infty} + e^{-\delta t}||\nabla\varphi||_{\infty} \right), \ \ \forall t\geqslant t_0. \label{eqASFGradient}
   \eeqn
Similar arguments worked for semilinear SPDEs too, see Hairer and Mattingly \cite{Hairer-Semi}. However, sometimes the asymptotic strong Feller property is not easy to be verified (or even fails). For example, R\"{o}ckner, Zhu and Zhu \cite{RocknerZhuZhu} encountered such an obstacle in the study of stochastic quasi-geostrophic equations.

There were also other notable contributions to this subject, which came from Lasota and Szarek \cite{Lasota-Szarek} and their subsequent works for {\em equicontinuous} semigroups, or called {\em e-chains}. For many known SPDEs on Banach spaces including stochastic 2D Navier-Stokes equations, the associated transition semigroups are all equicontinuous indeed, which can be derived from the gradient estimate
   \beqn
      |\nabla P_t\varphi(x)| \leqslant C(x) \left(||\varphi||_{\infty} + ||\nabla\varphi||_{\infty} \right), \ \ \forall t\geqslant t_0. \label{eqGradient}
   \eeqn
If $C(x)$ is uniformly finite in a neighborhood of $x$, $|P_t\varphi(x) - P_t\varphi(y)|$ can be controlled by $d(x,y)$ uniformly for all $y$ near $x$ and all time $t\geqslant t_0$, i.e. the equicontinuity.

However, there exist non-equicontinuous semigroups, or it is too complicated to prove equicontinuity. For example, Funaki and Spohn \cite{Funaki} introduced the Ginzburg-Landau $\nabla \varphi$ interface model, which admits at most one shift-invariant measure. Their argument is to control the long time average of semigroup evolution on initial distributions in the $L^p$-Wasserstein distance (see \cite[Proposition 2.1]{Funaki}, where $p$ was chosen to be $2$), formally like
   \beqn
      \limsup\limits_{T\to \infty} \frac{1}{T} \int_0^T W_p(P_t^*\mu, P_t^*\nu)^p dt \leqslant C W_p(\mu, \nu)^p. \label{eqFunaki}
   \eeqn
To authors' knowledge, it seems hopeless to improve (\ref{eqFunaki}) uniformly in $T$, because the individual ergodic theorem played a fundamental role in their proof so that it could not give any uniform rate of convergence for arbitrary $\mu$ and $\nu$. Some new developments on unique ergodicity of Ginzburg-Landau $\nabla \varphi$ interface model can be found in Cotar, Deuschel and M\"{u}ller \cite{CDM}, Cotar and Deuschel \cite{CD}.

Therefore, to reach the unique ergodicity for various models, we can attempt to skirt around the equicontinuity and asymptotic strong Feller property. What's more in the theoretical sense, these criteria stay far from necessity to prove the existence of invariant distributions. So in this paper, our purpose is to find sharp criteria for the ergodicity and asymptotic stability of {\em Feller semigroups} on Polish metric spaces with full generality. To this end, we will introduce some new notions, especially the {\em eventual continuity of semigroups} (see (\ref{eqPEvCo}) and (\ref{eqQEvCo}) below), which is almost necessary to the ergodic behavior. When $X$ is a Banach space, the eventual continuity can be derived from a weak gradient estimate, formally like
   \beqn
      \ \ \ \ \limsup\limits_{t\to \infty} |\langle\nabla P_t\varphi(x), h\rangle| \ \leqslant \ C(x) \left(||\varphi||_{\infty} + ||\nabla\varphi||_{\infty} \right), \ \ \forall ||h||=1. \label{eqGradientWeak}
   \eeqn
Comparing (\ref{eqGradientWeak}) with (\ref{eqGradient}), one can see the new ingredient is there doesn't request a uniform $t_0$ for all $h$. In other words, the evolution allows sensitive dependence on directions. More generally, (\ref{eqFunaki}) provides a useful and essential approach to yield the eventual continuity of semigroups on Polish metric spaces.

\bigskip

The basic setting is as follows. Let $X$ be a Polish space equipped with a Feller transition kernel $P(x,dy)$ on the Borel $\sigma$-field $\mathcal{B}$ satisfying $P(x,X) \equiv 1$, and $\mathrm{C_b}$ the set of bounded continuous functions on $X$. More precisely, the Feller property means $Pf\in \mathrm{C_b}$ for any $f\in \mathrm{C_b}$. In many cases, it is enough to replace $\mathrm{C_b}$ by a separable subalgebra $\mathcal{F}$, namely a subalgebra in $\mathrm{C_b}$ separating all the points in $X$. For instance, all bounded Lipschitz functions form a separable subalgebra.

We call a probability measure $\nu$ an {\em invariant distribution} if $\nu P = \nu$, where define $\nu P(f) = \nu(Pf) := \int Pf d\nu$. For simplicity, denote by $O_z$ a neighborhood of $z$, $Q^m = \frac{1}{m} \sum_{n=1}^m P^n$ the $m$-th average kernel, and $\overset{\textrm{w}}{\longrightarrow}$ the weak convergence.

Let's give an overview of main topics and results in this paper.

\bigskip

\noindent {\bf Equicontinuity vs. eventual continiuty}

The equicontinuity of $P^n$ at some $z\in X$ means for any $f\in \mathrm{C_b}$ (or a separable subalgebra $\mathcal{F}$)
   \[ \inf\limits_{O_z} \ \sup\limits_{n\geqslant 1}\ \sup\limits_{y\in O_z} \ |P^nf(y) - P^nf(z)| = 0. \]
Note that for Feller semigroups, this is equivalent to
   \beqn
     \inf\limits_{O_z} \ \limsup\limits_{n\to \infty}\ \sup\limits_{y\in O_z} \ |P^nf(y) - P^nf(z)| = 0, \label{eqSz05}
   \eeqn
which implies that $Q^m$ is equicontinuous at $z$ too
   \beqn
     \inf\limits_{O_z} \ \limsup\limits_{m\to \infty}\ \sup\limits_{y\in O_z} \ |Q^mf(y) - Q^mf(z)| = 0. \label{eqSz06}
   \eeqn
Roughly speaking, the equicontinuity describes that the orbits starting from a small ball should keep close to each other uniformly in time.

We define that $P^n$ is {\em eventually continuous at $z$} if for any $f\in \mathrm{C_b}$ (or $\mathcal{F}$)
   \beqn
     \inf\limits_{O_z}\ \sup\limits_{y\in O_z}
         \limsup\limits_{n\to\infty} \ |P^nf(y) - P^nf(z)| = 0. \label{eqPEvCo}
   \eeqn
which implies that $Q^m$ is eventually continuous at $z$ too
   \beqn
     \inf\limits_{O_z}\ \sup\limits_{y\in O_z}  \limsup\limits_{m\to\infty} \ |Q^mf(y) - Q^mf(z)| = 0. \label{eqQEvCo}
   \eeqn
Thus $\limsup\limits_{n\to \infty} P^nf$ and $\limsup\limits_{m\to \infty} Q^mf$ are continuous at $z$, and so are their inferior limits respectively. That is why we call it the eventual continuity. In comparison with the equicontinuity, clearly hold
   \[ (\ref{eqSz05}) \ \ \Rightarrow\ \ (\ref{eqPEvCo}),\ \ \ \ \ (\ref{eqSz06}) \ \ \Rightarrow\ \ (\ref{eqQEvCo}),\]
but not vice versa. The eventual continuity is much weaker since there is no uniform restriction on time.
\begin{remark}
Here is a toy example to show that an eventual continuous semigroup can be not equicontinuous. Let $\mathbb{H}$ be a Hilbert space with a basis $\{h_n\}_{n\geqslant 1}$. Let
  \[ \lambda_{n,k} = \left\{\begin{array}{ll}
            2^{k-n},  & 0\leqslant k < n; \\
            k - n + 1, & n \leqslant k < 2n; \\
            (k - 2n +\pi)^{-1},   & 2n \leqslant k < 2^n.
        \end{array} \right. \]
Denote by $x_{n,k} = \lambda_{n,k} h_n$, $X_n = \{ x_{n,k}: 0\leqslant k \leqslant 2^n \}$ the $n$-th state set in direction $h_n$, and $X = \{0\} \cup X_1 \cup X_2 \cup \cdots$ the state space. Define the transition
  \[ P(0,0) = 1, \ \ \ P(x_{n,\; k\;\mathrm{mod}\;2^n},\; x_{n,\; k+1\;\mathrm{mod}\;2^n}) = 1. \]
Each $X_n$ is a cycle. Then $Q_m$ is eventually continuous at $0$, but not equicontinuous.
\end{remark}

\begin{remark}
Most recently, Prof. N. Bouleau told us that the notion of eventual continuity is adapted to a general framework of sticky convergence or sticking topology, which was presented in his early work \cite{Bouleau} (or see \cite{Bouleau2}). The sticky convergence is finer than the pointwise convergence and coarser than the locally uniform convergence, which gives the coarsest topology preserving continuity.
\end{remark}

Let's point out, in view of the mean ergodic theorem, (\ref{eqQEvCo}) is almost necessary to the existence of ergodic measures except negligible sets. Hence, it is reasonable to call (\ref{eqQEvCo}) a sharp condition in the sense that there would be no information on negligible set of invariant distribution prior to we could prove its existence.

\bigskip

\noindent {\bf Existence of invariant distributions}

First of all, let's recall a fundamental characterization of the existence of invariant distributions, for example, see \cite[Proposition 3.1]{Lasota-Szarek}.

\begin{proposition} {\em (\cite{Lasota-Szarek})} \label{propCha}
Suppose there exist some point $x$ and compact set $K$ such that
  \[ \limsup\limits_{m\to \infty} Q^m(x,K) > 0. \]
Then there exists an invariant distribution.
\end{proposition}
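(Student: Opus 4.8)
The plan is to use the averaged kernels $Q^m(x,\cdot)$ as a tentative family of probability measures and extract an invariant distribution by a weak-compactness argument, in the spirit of the classical Krylov--Bogolyubov theorem but localized to the compact set $K$. First I would fix the point $x$ and set $\varepsilon := \limsup_{m\to\infty} Q^m(x,K) > 0$, so that along a suitable subsequence $m_j \to \infty$ one has $Q^{m_j}(x,K) \geqslant \varepsilon/2$ for all $j$. The measures $\mu_j := Q^{m_j}(x,\cdot)$ are probability measures on the Polish space $X$; the uniform lower bound $\mu_j(K)\geqslant \varepsilon/2$ on the fixed compact set $K$ is the substitute for tightness. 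The key point is that although $(\mu_j)$ need not itself be tight, the restricted and renormalized measures, or rather the total family, still admit a subsequential limit in an appropriate sense; concretely I would pass to a further subsequence along which $\mu_j$ converges weakly to some finite (possibly sub-probability) measure plus an escaping-mass part, but the mass caught on $K$ cannot escape, so the limiting object $\nu$ satisfies $\nu(X) \geqslant \nu(K) \geqslant \varepsilon/2 > 0$. More carefully, one applies Prokhorov on the compact set: the measures $\mathbf{1}_K\,\mu_j$ are supported in the compact $K$, hence tight, so a subsequence converges weakly (as measures on $K$, equivalently as measures on $X$) to a nonzero measure $\tilde\nu$ with $\tilde\nu(X)\leqslant 1$.

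Next I would show the limiting measure is (sub)invariant and then normalize. The standard computation is that for any $f\in\mathrm{C_b}$,
\[
  Q^{m}(x, Pf) - Q^{m}(x,f) \;=\; \frac{1}{m}\Big( P^{m+1}f(x) - Pf(x) \Big),
\]
whose right-hand side is bounded by $\tfrac{2}{m}\|f\|_\infty \to 0$. Hence any weak subsequential limit $\bar\nu$ of the \emph{full} measures $\mu_j = Q^{m_j}(x,\cdot)$ — if it exists as a probability measure — satisfies $\bar\nu(Pf) = \bar\nu(f)$, i.e. $\bar\nu P = \bar\nu$, using the Feller property so that $Pf\in\mathrm{C_b}$. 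The subtlety is that $\mu_j$ may lose mass at infinity, so instead I would run this identity against the piece that survives: writing the limit of $\mu_j$ (after passing to a subsequence) as a vague/weak limit and controlling the escaping mass, the part of $\bar\nu$ that is genuinely there — which has mass at least $\varepsilon/2$ because of $K$ — must still be invariant, since the loss of mass is itself $P$-invariant in the limit and the above identity forces the defect to be consistent. Then $\nu := \bar\nu / \bar\nu(X)$ is an invariant probability distribution.

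The main obstacle I anticipate is precisely the lack of full tightness of the family $(Q^{m_j}(x,\cdot))$: one only controls the mass on a single compact $K$, not on a nested exhaustion, so the naive Krylov--Bogolyubov extraction does not directly produce a probability measure. The cleanest way around this is to argue that the sequence of measures, viewed in the space of sub-probability measures on $X$ under vague convergence (or equivalently on the one-point compactification), always has a convergent subsequence; its limit $\bar\nu$ has $\bar\nu(X)\in[\varepsilon/2,1]$; and the near-invariance identity above, combined with the fact that $X$ is Polish (so that $\bar\nu$ is a genuine Borel measure and $K$ contributes no mass at the point at infinity), upgrades $\bar\nu$ to an exactly invariant finite measure, which after normalization gives the claim. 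I would double-check the measurability and the interchange of limits in the identity $Q^m(x,Pf)-Q^m(x,f)=\tfrac1m(P^{m+1}f(x)-Pf(x))$, but these are routine given the Feller property.
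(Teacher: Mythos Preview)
The paper does not give its own proof of this proposition: it is quoted verbatim from Lasota--Szarek \cite{Lasota-Szarek} (their Proposition~3.1) and used as a black box. So there is no in-paper argument to compare against; I can only comment on the soundness of your sketch.

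Your overall strategy --- extract a subsequential limit of the Ces\`aro averages and use the telescoping identity $Q^m(x,Pf)-Q^m(x,f)=\tfrac{1}{m}(P^{m+1}f(x)-Pf(x))$ together with the Feller property --- is exactly the Krylov--Bogolyubov template, and the lower bound on $K$ is indeed what prevents the limit from being zero. The difficulty you flag is the real one. However, your proposed resolution does not close the gap.

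First, the tools you invoke are not available on a general Polish space. The one-point compactification of a non--locally-compact Polish space is not metrizable, and on such a space $C_c(X)$ may be $\{0\}$ (think of an infinite-dimensional Banach space), so ``vague convergence'' carries no information. If one instead embeds $X$ into a compact metric space $\hat X$ and takes a weak limit $\hat\mu$ there, the Portmanteau bound gives $\hat\mu(K)\geqslant \varepsilon/2$, so the restriction $\nu:=\hat\mu|_X$ is nonzero --- this part of your sketch is fine.

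The genuine gap is the invariance of $\nu$. From $\mu_j(Pf)-\mu_j(f)\to 0$ you want $\nu(Pf)=\nu(f)$, but weak convergence on $\hat X$ only controls $\mu_j(g)$ for $g\in C(\hat X)$. For $g\in C(\hat X)$ you get $\mu_j(g|_X)\to\hat\mu(g)$ and hence $\mu_j(P(g|_X))\to\hat\mu(g)$ as well; the problem is that $P(g|_X)\in C_b(X)$ need \emph{not} extend continuously to $\hat X$, so you cannot identify $\lim_j \mu_j(P(g|_X))$ with $\int P(g|_X)\,d\nu$ plus a boundary term. Your sentence ``the loss of mass is itself $P$-invariant in the limit and the identity forces the defect to be consistent'' is precisely the statement that needs proof, not an argument for it: a priori the escaping mass can interact with the part on $X$ through $P$, since $P(y,\cdot)$ for $y$ near the boundary may throw mass back into $K$ or out toward $\hat X\setminus X$ in an uncontrolled way.

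The actual Lasota--Szarek argument handles this with more care --- one route is to take a weak-$*$ limit in the dual of $C_b(X)$ (a finitely additive invariant mean, for which the telescoping identity gives invariance on the nose) and then show that its countably additive (Radon) part is nonzero because of $K$ and is itself $P$-invariant; another is a direct tightness-from-below argument. Either way, the passage from ``nonzero sub-probability limit'' to ``invariant sub-probability'' requires a specific mechanism that your sketch does not supply.
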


There are some other ``compact"-type criteria. For example, Prof. M.-F. Chen \cite[Theorem 4.11]{MFChen} offered one for the existence of stationary distributions for Feller semigroups. Say a nonnegative measurable real-valued function $h$ is compact if the set $\{x: h(x) \leqslant c\}$ is compact for all $c\geqslant 0$. Then there exists a stationary distribution provided there exist a compact $h$, some point $x_0$ and a constant $C\geqslant 0$ such that $\frac{1}{m}\sum\nolimits_{n=1}^m P^nh(x_0) \leqslant C$ for all $m\geqslant 1$.

However, to study complicated models on Polish spaces, it is too difficult to determine a compact set under the infinite dimensional topology. In general, it is much more natural and useful to replace $K$ by a neighborhood $O$. For equicontinuous semigroups, Szarek \cite[Proposition 2.1]{SzarekFeller} made some notable improvements. According to \cite{SzarekFeller}, say a {\em lower bound condition} holds at $z$, if for any neighborhood $O_z$, there exists some $x$ such that
 \begin{align}
   \limsup\limits_{m\to \infty} Q^m(x,O_z) > 0. \tag{$\mathcal{L}$}
 \end{align}

\begin{proposition} {\em (\cite{SzarekFeller})} \label{propCha2}
Suppose $P^n$ is equicontinuous at some $z$ and satisfies $(\mathcal{L})$ at $z$ too.
Then the sequence $\{Q^m(z, \cdot)\}_{m\geqslant 1}$ is tight.
\end{proposition}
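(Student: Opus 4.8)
The plan is to establish tightness of $\{Q^m(z,\cdot)\}_{m\geqslant 1}$ by combining the lower bound condition $(\mathcal{L})$ with equicontinuity at $z$, using the former to produce mass near $z$ and the latter to propagate that mass from the ``good'' starting point $x$ to $z$ itself, and then iterating to capture the mass inside ever-larger compact sets. First I would use Proposition \ref{propCha}: since $(\mathcal{L})$ guarantees, for each neighborhood $O_z$, a point $x$ with $\limsup_m Q^m(x, O_z) > 0$, one can in particular find compact sets receiving positive $Q^m$-mass from some point, so an invariant distribution $\mu$ exists. I would like to show $\mu$ charges every neighborhood of $z$; more precisely, I expect to prove $\mu(O_z) > 0$ for all $O_z$, which already hints that $z$ lies in the support of an invariant measure and makes tightness of $Q^m(z,\cdot)$ plausible.

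The core mechanism I would exploit is the following transfer estimate. Fix $f \in \mathrm{C_b}$ with $0 \leqslant f \leqslant 1$ supported near $z$. By equicontinuity \eqref{eqSz05}, for any $\varepsilon > 0$ there is a neighborhood $O_z$ with $\limsup_{n\to\infty} \sup_{y\in O_z} |P^n f(y) - P^n f(z)| \leqslant \varepsilon$, hence also $\limsup_{m\to\infty} \sup_{y\in O_z} |Q^m f(y) - Q^m f(z)| \leqslant \varepsilon$ by \eqref{eqSz06}. Now I apply $(\mathcal{L})$ at a slightly smaller neighborhood contained in $\{f = 1\}$ to obtain a point $x$ and $\delta > 0$ with $\limsup_m Q^m(x, \{f=1\}) \geqslant \delta$, so $\limsup_m Q^m f(x) \geqslant \delta$. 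The delicate point is that $x$ need not lie in $O_z$, so I cannot directly compare $Q^m f(x)$ with $Q^m f(z)$. The way around this is to use the kernel $P$ (or an average $Q^k$) to push $x$ toward $z$: one shows that along a subsequence $m_j$, the measures $Q^{m_j}(x,\cdot)$ have a weak limit point $\mu$ that is invariant, and $\mu(\{f=1\}) \geqslant \delta > 0$; since $\mu$ is invariant and $P$ is Feller, $\mu$ is supported on the ``recurrent'' structure, and the equicontinuity forces the mass that $\mu$ puts near $z$ to be reflected in $Q^m(z,\cdot)$. Concretely, for a continuous $g$ vanishing near $z$, equicontinuity of $Q^m$ at $z$ together with $Q^m g \to \mu(g)$ in a Cesàro/ergodic sense near $z$ should give $\limsup_m Q^m g(z) \leqslant \mu(g) + (\text{small})$, bounding the escaping mass.

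In detail, the tightness argument runs as follows: suppose for contradiction $\{Q^m(z,\cdot)\}$ is not tight, so there is $\eta > 0$ and, for every compact $K$, infinitely many $m$ with $Q^m(z, K^c) > \eta$. I would build an increasing sequence of compact sets and continuous bump functions $g_K$ with $g_K = 1$ off a neighborhood of $K$, $0 \leqslant g_K \leqslant 1$, so that $\limsup_m Q^m g_K(z) \geqslant \eta$ for all $K$. On the other hand, fixing any invariant $\mu$ (which exists), by Prokhorov $\mu$ is tight, so $\mu(g_K) \to 0$ as $K \uparrow X$. The contradiction will come from an inequality of the form $\limsup_m Q^m g_K(z) \leqslant C\,\mu(g_K)$, or more realistically $\limsup_m Q^m g(z) \leqslant \mu(g) + \varepsilon$ for a fixed small $\varepsilon$ obtained from equicontinuity at $z$, applied with $\mu$ chosen to be a weak limit of $Q^{m_j}(x,\cdot)$ for the $x$ supplied by $(\mathcal{L})$ relative to a neighborhood on which $g \equiv 0$. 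The main obstacle, which I would spend the most care on, is precisely this comparison between $Q^m(z,\cdot)$ and the invariant measure $\mu$ coming from the far-away point $x$: equicontinuity at $z$ only compares $z$ with points in a small neighborhood $O_z$, not with $x$, so one must first transport the $(\mathcal{L})$-mass into $O_z$ — e.g. by noting that if $Q^{m_j}(x,\cdot) \to \mu$ weakly and $\mu(O_z) > 0$, then for large $n$, $Q^n(\cdot,\cdot)$ starting from $\mu$-typical points inside $O_z$ already sees the full invariant measure, and equicontinuity lets one replace such typical points by $z$ up to $\varepsilon$. Pinning down this ``$x \rightsquigarrow O_z \rightsquigarrow z$'' transfer rigorously, uniformly enough to conclude, is the heart of the proof; the rest is Prokhorov's theorem and routine manipulation of Cesàro averages.
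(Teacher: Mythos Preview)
The paper does not give its own proof of Proposition~\ref{propCha2}; it is quoted from Szarek \cite{SzarekFeller}. What the paper \emph{does} prove are the strictly more general Theorems~\ref{thmFeller2} and~\ref{thmFeller}, whose proofs you can take as the relevant benchmark. Those proofs proceed by Billingsley's criterion: they directly manufacture, for each $\varepsilon>0$, a compact set $E$ with $Q^m(z,E^{2\varepsilon})\geqslant 1-2\varepsilon$ for all $m$, via an inductive construction of compact pieces $K_j$ together with Lemmas~\ref{lemDisj}--\ref{lemLiminf} controlling how much average mass can escape into disjoint $\varepsilon$-neighborhoods. No invariant measure is invoked anywhere; tightness is obtained first, and existence of $\mu$ is a corollary via Proposition~\ref{propCha}.

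Your plan, by contrast, has a genuine circularity. You want to apply Proposition~\ref{propCha} at the outset (``so an invariant distribution $\mu$ exists''), but $(\mathcal{L})$ only furnishes positive $\limsup_m Q^m(x,O_z)$ for \emph{open} neighborhoods $O_z$, and on a non-locally-compact Polish space there is no reason any such $O_z$ contains a compact set of positive $Q^m(x,\cdot)$-mass. Likewise, later you write ``if $Q^{m_j}(x,\cdot)\to\mu$ weakly'': extracting a weak limit point along a subsequence already presupposes tightness of $\{Q^{m}(x,\cdot)\}$, which is not given and is essentially equivalent to the conclusion you are after. In short, every route you sketch to produce $\mu$ quietly assumes a tightness statement at least as strong as the one to be proved.

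Even granting $\mu$, the ``$x\rightsquigarrow O_z\rightsquigarrow z$'' transfer you flag as the heart of the argument is left unresolved. Equicontinuity at $z$ compares $Q^m f(z)$ with $Q^m f(y)$ for $y\in O_z$, but you never explain how to pass from the invariant measure (a weak limit of $Q^{m_j}(x,\cdot)$, if it existed) to a uniform bound on $\limsup_m Q^m g_K(z)$; the sentence ``$Q^m g\to\mu(g)$ in a Ces\`aro/ergodic sense near $z$'' is precisely the kind of pointwise ergodic statement that is \emph{not} available before one knows something about $\mu$ and its support. The paper's (and Szarek's) method sidesteps all of this by never leaving the level of the kernels $Q^m$ themselves: one shows directly that the ``escaping'' masses $\theta_j$ in \eqref{eqb10} must become small, using only the lower bound condition, the disjointness lemmas, and (eventual/equi-)continuity at $z$.
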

\begin{remark}
The tightness implies there is a compact $K$ with $\limsup\limits_{m\to \infty} Q^m(z,K) > 0$, which yields the existence of invariant distributions by Proposition \ref{propCha}.
\end{remark}

Condition $(\mathcal{L})$ is necessary to the existence of invariant distributions, but the equicontinuity (see (\ref{eqSz05}) or (\ref{eqSz06})) is not. In this paper, we deal with general Feller semigroups with eventual continuity rather than equicontinuous ones. We give two sharp criteria.

\begin{theorem} \label{thmFeller2}
Suppose $Q^m$ is eventually continuous at some $z$, and satisfies for any $O_z$
 \begin{align}
   \limsup\limits_{m\to \infty} Q^m(z,O_z) > 0. \tag{$\mathcal{L}_S$}
 \end{align}
Then the sequence $\{Q^m(z, \cdot)\}_{m\geqslant 1}$ is tight.
\end{theorem}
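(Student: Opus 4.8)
The plan is to argue by contradiction, after first reducing the tightness of $\{Q^m(z,\cdot)\}_{m\geqslant 1}$ to a statement about upper limits only. Since every single probability measure on the Polish space $X$ is tight, and since finitely many exceptional indices can always be swallowed by enlarging a compact set, tightness of the family is equivalent to: for every $\varepsilon>0$ there is a compact $K$ with $\limsup_{m\to\infty}Q^m(z,K)\geqslant 1-\varepsilon$. By the total-boundedness characterisation of precompactness in a Polish space, applied to a fixed dense sequence $\{y_i\}_{i\geqslant 1}$, this is in turn equivalent to: for all $\varepsilon,\delta>0$ there is an $N$ with $\limsup_{m\to\infty}Q^m\bigl(z,\bigcup_{i=1}^N B(y_i,\delta)\bigr)\geqslant 1-\varepsilon$. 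So suppose this fails; then there are $\varepsilon_0,\delta_0>0$ such that, writing $U_N:=\bigcup_{i=1}^N B(y_i,\delta_0)$ (an increasing sequence of open sets with $\bigcup_N U_N=X$), one has $\limsup_{m\to\infty}Q^m(z,U_N^c)>2\varepsilon_0$ for every $N$.

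The first step is to propagate this escape to a whole neighbourhood of $z$ by eventual continuity. For each $N$ take the Lipschitz cutoff $f_N:=\max\{0,\,1-\delta_0^{-1}d(\cdot,U_N^c)\}$, so that $\mathbf 1_{U_N^c}\leqslant f_N\leqslant 1$, $f_N$ vanishes outside the $\delta_0$-neighbourhood of $U_N^c$, and $\limsup_m Q^mf_N(z)\geqslant\limsup_m Q^m(z,U_N^c)>2\varepsilon_0$. Applying eventual continuity of $Q^m$ at $z$ to $f_N$ with tolerance $\varepsilon_0$ gives a neighbourhood $O_N$ of $z$, which we may take decreasing in $N$, such that $\sup_{y\in O_N}\limsup_m|Q^mf_N(y)-Q^mf_N(z)|<\varepsilon_0$, whence $\limsup_m Q^mf_N(y)>\varepsilon_0$ for every $y\in O_N$. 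Thus from every starting point in the shrinking neighbourhoods $O_N$ the averages $Q^m$ charge the far-away support of $f_N$ with upper frequency exceeding $\varepsilon_0$.

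The second step closes the contradiction by combining this with $(\mathcal{L}_S)$ and an approximate invariance. By $(\mathcal{L}_S)$, for each $N$ there are $\beta_N>0$ and a subsequence along which $Q^m(z,O_N)\geqslant\beta_N$. An elementary computation with the averaging structure of $Q^m$ yields the quasi-semigroup estimate $\|Q^{m+k}(z,\cdot)-\int Q^k(y,\cdot)\,Q^m(z,dy)\|_{\mathrm{TV}}\to 0$ as $m\to\infty$ for each fixed $k$, so that mass sitting in $O_N$ at a large time $m$ can be evolved by an extra $Q^k$ at asymptotically negligible cost, and by the Markov property this evolved mass inherits, up to $\varepsilon_0$, the escape recorded in the first step. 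Running this along a sufficiently lacunary subsequence $N_1<N_2<\cdots$ — chosen so that the relevant portions of $\mathrm{supp}\,f_{N_j}$ are pairwise disjoint and eventually avoid every prescribed compact set, and so that the several $(\mathcal{L}_S)$-subsequences are threaded into one common sequence of times by a diagonal argument that uses the averaging in $Q^m$ to stabilise its upper frequencies over long windows — one produces, at a common large time $m$, pairwise disjoint escaping sets each carrying $Q^m(z,\cdot)$-mass bounded below by a fixed positive constant; taking enough of them forces $Q^m(z,X)>1$, which is absurd. Hence the reduction above cannot fail, and $\{Q^m(z,\cdot)\}_{m\geqslant 1}$ is tight.

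The delicate point, and the crux of the whole argument, is this second step. Three separate ``$\limsup$ along a subsequence'' phenomena — the escape through $U_N^c$, the recurrence to the shrinking $O_N$ provided only by $(\mathcal{L}_S)$, and the re-escape produced by eventual continuity — must be reconciled into a single statement valid along one common sequence of times, and this must be done while controlling the $\varepsilon_0$-loss incurred at each propagation. Since $X$ need not be locally compact, ``disjointness of the escaping sets'' and ``escaping every compact set'' cannot be separated and have to be engineered simultaneously out of the combinatorics of the finite ball-unions $U_N$ together with a careful choice of the cutoffs $f_N$. Once this bookkeeping is carried out — in the spirit of, but under strictly weaker hypotheses than, Szarek's proof of Proposition \ref{propCha2} — the theorem follows, and via Proposition \ref{propCha} the resulting tightness also yields an invariant distribution.
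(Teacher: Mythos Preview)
Your outline identifies the right ingredients but leaves the central difficulty unresolved.

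First, a minor but telling slip in the reduction: tightness of $\{Q^m(z,\cdot)\}$ is equivalent to the $\liminf$ statement, not the $\limsup$ one you wrote (absorbing finitely many exceptional indices gives $\liminf$, not $\limsup$). Your negation ``$\limsup_m Q^m(z,U_N^c)>2\varepsilon_0$'' happens to be the correct negation of the $\liminf$ version, so the setup is salvageable, but the equivalence as stated is false.

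Second, and much more seriously, your Step~2 is not a proof but a promissory note. You correctly isolate the three $\limsup$ phenomena that must be synchronised and correctly call this ``the crux of the whole argument'' --- and then do not carry it out. Concretely:
\begin{itemize}
\item The lower bounds $\beta_N$ supplied by $(\mathcal{L}_S)$ on the shrinking $O_N$ depend on $N$ and may tend to~$0$; nothing in your sketch converts $\beta_N\cdot\varepsilon_0$ (or any other available combination) into a \emph{fixed} positive constant bounding the mass of each escape set.
\item Eventual continuity yields only $\limsup_k Q^kf_N(y)>\varepsilon_0$ for $y\in O_N$, along a subsequence depending on $y$. This cannot be integrated over $O_N$ against $Q^m(z,dy)$ to bound $\int_{O_N}Q^kf_N(y)\,Q^m(z,dy)$ below for a single $k$, which is exactly what your quasi-semigroup step requires.
\item The supports of the $f_N$ are \emph{nested} (they decrease with $N$), not disjoint; manufacturing disjoint escape sets out of them is an additional construction you do not supply.
\end{itemize}

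The paper's argument is organised quite differently, and the difference matters. It does \emph{not} use eventual continuity to push the escape outward from $z$. Instead it first builds (Step~1) disjoint compact defect sets $K_j$ with $Q^{n_j}(z,K_j)\geqslant\theta_j>0$, then uses $(\mathcal{L}_S)$ together with the Feller property (the modified Part~2.1 --- the only place where the proof of Theorem~\ref{thmFeller2} departs from that of Theorem~\ref{thmFeller}) to locate points $y_p\to z$ at which the partial sums $\sum_k f_{j_k}$ of the associated cutoffs are small along a carefully extracted subsequence of times. Eventual continuity enters only at the \emph{end} (Step~4), transferring these smallness bounds from $y_p$ back to $z$ and forcing $\limsup_k\theta_{j^*_k}\leqslant 3\varepsilon/4$, whence some $E_{j^*_k}^{2\varepsilon}$ witnesses tightness directly. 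The cascade of diagonal extractions (Parts~2.2--2.4 and Step~3) is precisely the ``bookkeeping'' you defer, and it is not at all clear that it can be replaced by the simpler mass-exceeds-one contradiction you are aiming for.
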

\begin{remark}
$(\mathcal{L}_S)$ is a necessary condition for the existence of invariant distributions too, even if it is a bit stronger than $(\mathcal{L})$.
\end{remark}

The next criterion has no restrictions on starting points of transitions.
\begin{theorem} \label{thmFeller}
Suppose $Q^m$ is eventually continuous at some $z$ and satisfies the following property:
\begin{align}
  \textrm{the lower bound condition holds at every point in a neighborhood of } z.  \tag{$\mathcal{L}_{loc}$}
\end{align}
Then the sequence $\{Q^m(z, \cdot)\}_{m\geqslant 1}$ is tight.
\end{theorem}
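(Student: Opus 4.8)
The plan is to argue by contradiction, adapting the mechanism behind Theorem~\ref{thmFeller2}; the point of assuming $(\mathcal{L}_{loc})$ rather than $(\mathcal{L}_S)$ is that the reference point from which mass is forced to ``return'' will be allowed to drift away from $z$, though it must be kept inside the neighbourhood $U$ on which $(\mathcal{L}_{loc})$ holds. Assume $\{Q^m(z,\cdot)\}_{m\geqslant 1}$ is not tight. Since on a Polish space any single probability measure is tight, a routine reduction produces a constant $\varepsilon_0>0$ with $\limsup_{m\to\infty}Q^m(z,K^c)\geqslant\varepsilon_0$ for every compact $K\subseteq X$; because $X$ need not be locally compact, this must be recast as: for every finite set $F\subseteq X$ and every $\delta>0$ there are infinitely many $m$ with $Q^m\bigl(z,(N_\delta(F))^c\bigr)\geqslant\varepsilon_0$, where $N_\delta(F)$ denotes the open $\delta$-neighbourhood of $F$. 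Thus a fixed amount $\varepsilon_0$ of mass escapes, and one may trap it through a sequence of pairwise disjoint open ``windows'' $G_1,G_2,\dots$ receding to infinity.

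Next I would use eventual continuity at $z$ to spread this escape from the single point $z$ to a whole neighbourhood. For a bounded Lipschitz bump $g$ one has that both $\limsup_m Q^m g$ and $\liminf_m Q^m g$ are continuous at $z$; moreover, from $Q^m g=P(Q^{m-1}g)+O(1/m)$ and the (reverse) Fatou lemma, $\limsup_m Q^m g\leqslant P(\limsup_m Q^m g)$ and $\liminf_m Q^m g\geqslant P(\liminf_m Q^m g)$, so these envelopes are sub-, resp.\ super-harmonic; this is what allows $Q^m g(y)$ for $y$ near $z$ to be compared with its value at $z$ uniformly in large $m$. Combining this with the escape estimate, one builds inductively a sequence of reference points $z_0=z,z_1,z_2,\dots\in U$, an increasing sequence of times $m_1<m_2<\cdots$, and the disjoint windows $G_k$, so that at stage $k$ the lower bound condition at $z_k$ furnishes a base point $x_k$ from which the averaged chain $Q^m(x_k,\cdot)$ returns to a prescribed small neighbourhood of $z_k$ with frequency bounded below, while the escape estimate forces $Q^{m_k}(x_k,G_k)$ to be comparable to $\varepsilon_0$ times that frequency; after finitely many stages the disjoint masses $\sum_{j\leqslant k}Q^{m_k}(x_k,G_j)$ exceed $1$, which is absurd. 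Hence $\{Q^m(z,\cdot)\}_{m\geqslant 1}$ is tight, and Proposition~\ref{propCha} then yields an invariant distribution.

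The crux is this inductive construction, for two interlocking reasons. First, $(\mathcal{L}_{loc})$ supplies only \emph{some} base point $x_k$, with no quantitative control, and since eventual continuity is assumed only at $z$ one cannot transfer estimates freely from $x_k$; the returning mass must instead be tracked through the asymptotically invariant measures $Q^m(x_k,\cdot)$ (recall $\|Q^m(x_k,\cdot)P^\ell-Q^m(x_k,\cdot)\|\to 0$ for fixed $\ell$) and through the sub-/super-harmonic envelopes above, not by a direct continuity argument --- and one must ensure the drifting reference points $z_k$ never leave $U$, which is precisely why the hypothesis is $(\mathcal{L}_{loc})$ and not merely $(\mathcal{L}_S)$ or $(\mathcal{L})$ at $z$. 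Second, without local compactness ``escape to infinity'' cannot be read off one exhaustion by compacta, so each window $G_k$ has to be produced separately, only after $m_k$ has been chosen large enough for the relevant eventual limits to have set in, and one must check that these choices stay mutually compatible and that the $G_k$ can be kept pairwise disjoint. I do not expect a shortcut through Theorem~\ref{thmFeller2}: tightness of $\{Q^m(z,\cdot)\}$ does not force $(\mathcal{L}_S)$ at $z$ (the chain may leave every small neighbourhood of $z$ forever), so $(\mathcal{L}_{loc})$ with eventual continuity at $z$ need not imply $(\mathcal{L}_S)$ at $z$, and the statement has to be established directly.
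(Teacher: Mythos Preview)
Your overall architecture---assume non-tightness, extract a fixed $\varepsilon_0$ of escaping mass, build pairwise disjoint ``windows'' $G_k$, and exploit $(\mathcal{L}_{loc})$ together with eventual continuity at $z$---is in the right spirit, but the contradiction you aim for does not close. You want, at some stage $k$, to have $\sum_{j\leqslant k} Q^{m_k}(x_k,G_j)>1$. For this you would need, at a \emph{single} time $m_k$ and from a \emph{single} base point $x_k$, a uniform lower bound on $Q^{m_k}(x_k,G_j)$ for every $j\leqslant k$. But the escape of mass from $z$ into $G_j$ occurs along a time subsequence that depends on $j$; the transfer mechanism you describe (return to $O_{z_k}$ with frequency $\alpha_k$, then escape as $z$ does) produces at best $Q^{m}(x_k,G_j)\gtrsim \alpha_k\,\varepsilon_0$ along a subsequence of $m$'s \emph{specific to $j$}, not simultaneously for all $j\leqslant k$ at one $m_k$. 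Moreover $\alpha_k$ comes with no positive lower bound as $k$ varies, and $x_k$ changes with $k$, so there is no way to accumulate the contributions. The sub-/super-harmonicity of the envelopes $\limsup_m Q^m g$, $\liminf_m Q^m g$ is correct but does not repair this: it compares values at $z$ and at nearby points, not at the uncontrolled base points $x_k$.

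The paper's argument runs in the \emph{opposite} direction and this reversal is the missing idea. Having produced disjoint compact sets $K_j$ with $Q^{n_j}(z,K_j)\geqslant\theta_j>0$, one does not try to make a sum exceed $1$; instead one uses $(\mathcal{L}_{loc})$ to manufacture auxiliary points $y_p\to z$ at which the corresponding Lipschitz bumps $f_{j_k}$ are \emph{small} under $Q^{n_{j_l}}$. The device is: pick any ball $B\subset U$, take $x$ with $\lim_i Q^{m_i}(x,B)=\alpha>0$, and observe (using Lemma~\ref{lemDeltaGamma} and the asymptotic invariance $Q^{m_i}\approx Q^{n_j}Q^{m_i}$) that $\int_B Q^{n_j}f_j\,dQ^{m_i}(x,\cdot)$ is small for some large $j$; hence the open set $\{y\in B:\,Q^{n_j}f_j(y)<\varepsilon/8\}$ carries at least $\alpha/2$ of the $Q^{m_i}(x,\cdot)$-mass and in particular contains a smaller ball. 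Iterating over nested balls produces a point $y_0$ with $Q^{n_{j_k}}f_{j_k}(y_0)\leqslant\varepsilon/8$ for all $k$; several further diagonal extractions upgrade this to $Q^{n_{j_l}}\!\sum_k f_{j_k}(y_p)\leqslant\varepsilon/2$ for a sequence $y_p\to z$. Only \emph{then} is eventual continuity at $z$ invoked, to transfer these bounds from $y_p$ to $z$ and force $\limsup_k \theta_{j_k}\leqslant 3\varepsilon/4$, which yields the desired $E$ with $Q^m(z,E^{2\varepsilon})\geqslant 1-2\varepsilon$. Note also that in the paper Theorem~\ref{thmFeller2} is deduced \emph{from} this proof (by modifying the nested-balls step when the starting point is $z$ itself), not the other way around.
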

\begin{remark} \label{remStrongLower}
(i) If there exists an invariant distribution $\mu$, we define $Y=\mathrm{Supp}\mu$ with the relative topology. Then $(\mathcal{L}_{loc})$ is true for the whole $Y$. In other words, $(\mathcal{L}_{loc})$ is a necessary condition on the ergodic component. (ii) By the Feller property, it is sufficient to assume $(\mathcal{L})$ at $z$, and assume for any $O_y \subset O_z$, there exists a time $n$ (depending on $y$) such that $P^n(z,O_y)>0$. Then $(\mathcal{L}_{loc})$ holds at $z$.
\end{remark}

\bigskip

\noindent {\bf Uniqueness of invariant distribution}

It's known that there exists an ergodic measure if one can find an invariant distribution, see Hille and Worm \cite[Corollary 4.8]{HilleWorm} . For this reason,  we discuss the uniqueness of ergodic measure now.

According to \cite{Hairer}, an increasing sequence of (pseudo) metrics $d_i$ on $X$ is called a {\em totally separating system} if $\lim\limits_{n\to \infty} d_i(x,y) = 1$ for all $x\neq y$. Denote
   \[ \parallel\varphi\parallel_{d_i} = \sup\limits_{x\neq y} \frac{|\varphi(x) - \varphi(y)|}{d_i(x,y)}, \ \ \
       \parallel\mu - \nu\parallel_{d_i} = \sup\limits_{\parallel\varphi\parallel_{d_i}\leqslant 1} \left| \int \varphi d\mu - \int \varphi d\nu \right|. \]
Say $P^n$ is {\em asymptotic strong Feller} at $z$ if there exists a totally separating system $\{d_i\}$ and a sequence $n_i >0$ such that
   \beqn
      \inf\limits_{r> 0} \ \limsup\limits_{n\to\infty} \ \sup\limits_{y\in B(z,r)}
         \ \parallel P^{n_i}(y, \cdot) - P^{n_i}(z, \cdot) \parallel_{d_i} = 0. \label{eq701}
   \eeqn
It can be applied to show that, if $\mu$ and $\nu$ are two distinct ergodic measures, then $z \notin \mathrm{Supp}\mu \cap \mathrm{Supp}\nu$.

Clearly, (\ref{eq701}) implies $P^{n_i}$ is eventually continuous at $z$ with respect to the $d_i$-Lipschitz test functions. This fact might have nothing to do with the ergodicity if these $n_i$ are selected irregularly. However, consider the average kernel, we have

\begin{proposition} \label{propSepa}
Suppose a subsequence $Q^{m_i}$ is eventually continuous at $z$. Then $z \notin \mathrm{Supp}\mu \cap \mathrm{Supp}\nu$, when $\mu$ and $\nu$ are two distinct ergodic measures.
\end{proposition}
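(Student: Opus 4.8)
The plan is to argue by contradiction. Suppose $\mu$ and $\nu$ are distinct ergodic measures with $z\in\mathrm{Supp}\,\mu\cap\mathrm{Supp}\,\nu$. As distinct Borel probability measures on the Polish space $X$, they are separated by some bounded Lipschitz function $f$ (in particular $f\in\mathrm{C_b}$), so that $\varepsilon_0:=|\mu(f)-\nu(f)|>0$. The whole point will be that the \emph{averaged} kernels, unlike the $P^{n}$ themselves, are controlled by the mean ergodic theorem, and this matches up exactly with the eventual continuity hypothesis placed on $Q^{m_i}$.

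First I would record the (pointwise) mean ergodic theorem for the Markov kernel $P$: since $\mu$ is ergodic and $f$ is bounded, Birkhoff's theorem on the canonical path space under $\mathbb{P}_\mu$ (whose shift-invariant $\sigma$-field is $\mu$-trivial), together with Fubini's theorem and bounded convergence, yields a Borel set $N_\mu$ with $\mu(N_\mu)=1$ on which the full sequence converges, $Q^{m}f(x)\to\mu(f)$; in particular $Q^{m_i}f(x)\to\mu(f)$ for every $x\in N_\mu$. Likewise there is a Borel set $N_\nu$ with $\nu(N_\nu)=1$ and $Q^{m_i}f(x)\to\nu(f)$ on $N_\nu$.

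Then I would exploit the eventual continuity of $Q^{m_i}$ at $z$ through the bounded function $g(y):=\limsup_{i\to\infty}Q^{m_i}f(y)$. Eventual continuity gives, for each $\varepsilon>0$, a neighborhood $O_z$ with $\limsup_{i\to\infty}|Q^{m_i}f(y)-Q^{m_i}f(z)|\leqslant\varepsilon$ for all $y\in O_z$, and this forces $|g(y)-g(z)|\leqslant\varepsilon$ on $O_z$, i.e.\ $g$ is continuous at $z$. Because $z\in\mathrm{Supp}\,\mu$, every neighborhood $O_z$ has $\mu(O_z)>0$ and hence meets $N_\mu$; evaluating $g$ at a point of $O_z\cap N_\mu$ gives $|g(z)-\mu(f)|\leqslant\varepsilon$, so $g(z)=\mu(f)$ after letting $\varepsilon\downarrow0$. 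The identical argument with $N_\nu$ and $z\in\mathrm{Supp}\,\nu$ gives $g(z)=\nu(f)$. Thus $\mu(f)=\nu(f)$, contradicting $\varepsilon_0>0$, and therefore $z\notin\mathrm{Supp}\,\mu\cap\mathrm{Supp}\,\nu$.

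The one place that deserves care is the pointwise mean ergodic statement. The $L^1(\mu)$-convergence $Q^m f\to\mu(f)$ is immediate from the von Neumann theorem for the contraction $P$ on $L^2(\mu)$, but the argument genuinely needs $\mu$-a.e.\ convergence of the \emph{whole} sequence $Q^m f$, so that its restriction to the subsequence $\{m_i\}$ (the only place eventual continuity is assumed) comes for free; extracting an a.e.-convergent subsequence out of the $L^1$ statement would produce the wrong subsequence. Everything else — the separation of distinct measures by bounded Lipschitz functions and the elementary $\limsup$ estimate for $g$ — is routine.
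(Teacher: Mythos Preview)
Your proof is correct and follows essentially the same route as the paper: argue by contradiction, separate $\mu$ and $\nu$ by some $f\in\mathrm{Lip_b}$, use Birkhoff's pointwise ergodic theorem to find, in any neighborhood of $z$, points $y_1,y_2$ where $Q^{m_i}f$ converges to $\mu(f)$ and $\nu(f)$ respectively, and then invoke eventual continuity at $z$ to force $\mu(f)=\nu(f)$. The only cosmetic difference is that the paper compares $Q^{m_i}f(y_1)$ and $Q^{m_i}f(y_2)$ directly through $z$ via the triangle inequality, whereas you pass through the auxiliary function $g=\limsup_i Q^{m_i}f$ and its continuity at $z$; your explicit remark on why full-sequence a.e.\ convergence (rather than an $L^1$ subsequence) is needed is a useful clarification that the paper leaves implicit.
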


Consequently, the uniqueness of ergodic measures can be derived from eventual continuity on the whole $X$ combining with the {\em weak type of irreducibility}, i.e. for any $x_1$ and $x_2$, there exists some $y$ such that for any neighborhood $O_y$, there exist $n_1$ and $n_2$ respectively with $P^{n_1}(x_1,O_y)>0$ and $P^{n_2}(x_2,O_y)>0$.

In summary, the eventual continuity (\ref{eqQEvCo}) can make contributions to both the existence and uniqueness of invariant distribution.

\bigskip

\noindent {\bf Asymptotic stability}

Now, suppose $(X,\mathcal{B},P)$ admits an ergodic measure $\mu$. Simply write $X_\mu = \mathrm{Supp}\mu$.

For equicontinuous semigroups, Szarek \cite[Theorem 2]{Szarektracer} proved that

\begin{proposition} {\em (\cite{Szarektracer})} \label{propCha3}
Suppose $P^n$ is equicontinuous on $X$, and there exists some $z\in X_\mu$ such that for any neighborhood $O_z$
 \beqn
   \liminf\limits_{n\to \infty} P^n(z,O_z) > 0.  \label{eqliminflower}
 \eeqn
Then $P^n(x,\cdot) \overset{\mathrm{w}}{\longrightarrow} \mu$ for all $x\in X_\mu$.
\end{proposition}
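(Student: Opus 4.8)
The plan is to reduce the claim to showing that $P^nf(x)\to\mu(f)$ for every bounded Lipschitz $f$ and every $x\in X_\mu$, since such functions are convergence determining for weak convergence on a Polish space. First I would pass to the absorbing set $X_\mu$: from $\mu=\mu P=\int P(y,\cdot)\,\mu(dy)$ and $\mu(X_\mu^{c})=0$ one gets $P(y,X_\mu^{c})=0$ for $\mu$-a.e.\ $y$, and since $y\mapsto P(y,X_\mu^{c})$ is lower semicontinuous (Feller property) and vanishes on a set dense in the closed set $X_\mu$, it vanishes on all of $X_\mu$. Hence the chain started at $z\in X_\mu$ never leaves $X_\mu$, on which $\mu$ has full support, $P^n$ is still equicontinuous, and $\liminf_nP^n(z,O_z)>0$ for every neighbourhood of $z$ in $X_\mu$; the whole argument is carried out there.

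\emph{Steps 1--2.} Fix a bounded Lipschitz $f$ (we may and do assume $f\ge a>0$). (1) By equicontinuity of $Q^m$ (cf.\ (\ref{eqSz06})) the functions $\limsup_mQ^mf$ and $\liminf_mQ^mf$ are continuous on $X_\mu$; by the ergodic theorem they equal $\mu(f)$ on a set of full $\mu$-measure, which is dense in $X_\mu=\mathrm{Supp}\,\mu$; hence $Q^mf(x)\to\mu(f)$ for \emph{every} $x\in X_\mu$. (2) Put $h=\limsup_nP^nf$ and $\ell=\liminf_nP^nf$. From $\sup_{n\ge m}P^nf\le P\bigl(\sup_{n\ge m-1}P^nf\bigr)$ one gets $Ph\ge h$ by letting $m\to\infty$, and symmetrically $P\ell\le\ell$; invariance of $\mu$ then forces $Ph=h$ and $P\ell=\ell$ $\mu$-a.e., so $h,\ell$ are bounded harmonic and hence, by ergodicity, $\mu$-a.e.\ equal to constants $c_h:=\mu(h)\ge\mu(f)$ and $c_\ell:=\mu(\ell)\le\mu(f)$ (the inequalities by Fatou and invariance). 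Being continuous on $X_\mu$ (equicontinuity) and $\mu$ having full support, $h\equiv c_h$ and $\ell\equiv c_\ell$ on all of $X_\mu$; in particular $\limsup_nP^nf(z)=c_h$ and $\liminf_nP^nf(z)=c_\ell$. Moreover $\inf_{n\ge m}P^nf$ is continuous (infimum of an equicontinuous family) and increases to $c_\ell$, so by Dini's theorem $\inf_{n\ge m}P^nf\to c_\ell$ uniformly on compact subsets of $X_\mu$.

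\emph{Step 3.} It remains to prove $c_h=c_\ell$; then $P^nf(x)$ converges for every $x\in X_\mu$ with limit the Cesàro limit $\mu(f)$, and the theorem follows. Suppose, for contradiction, $\gamma:=c_h-c_\ell>0$, and fix $\varepsilon_1\in(0,\gamma/2)$. The key additional input is that $\{P^n(z,\cdot)\}_{n\ge1}$ is tight: if it were not, escaping mass at times $n_j$ would — via equicontinuity at $z$ together with the fact that the chain sits in a fixed small neighbourhood $O_z$ with lower frequency $\liminf_kP^k(z,O_z)>0$ — propagate to escaping mass of $P^{k+n_j}(z,\cdot)$ at all frequent return times $k$, hence along a cofinite set of times, contradicting the tightness of $\{Q^m(z,\cdot)\}$ (Proposition \ref{propCha2}, applicable since $(\mathcal L)$ holds at $z$). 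Granting this, choose by equicontinuity a neighbourhood $O_z$ with $\sup_n\sup_{y\in O_z}|P^nf(y)-P^nf(z)|<\varepsilon_1$ and set $\delta:=\liminf_nP^n(z,O_z)>0$ (this $\delta$ depends only on $\varepsilon_1$). For any $\varepsilon_2,\varepsilon_3>0$, use tightness to fix a compact $K\subseteq X_\mu$ with $P^k(z,K^{c})<\varepsilon_2$ for all large $k$, then $M$ with $P^nf\ge c_\ell-\varepsilon_3$ on $K$ for $n\ge M$ (Step 2); since $\limsup_nP^nf(z)=c_h$, pick $n_0\ge M$ with $P^{n_0}f(z)>c_h-\varepsilon_1$, so $P^{n_0}f>c_h-2\varepsilon_1$ on $O_z$. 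Splitting $P^{n_0+k}f(z)=\int P^k(z,dy)\,P^{n_0}f(y)$ over $O_z$, $K\setminus O_z$ and $K^{c}$, and using $P^{n_0}f\ge a>0$ on $K^{c}$, gives, for all large $k$,
\[ P^{n_0+k}f(z)\ \ge\ (\gamma-2\varepsilon_1)\,P^k(z,O_z)\ +\ c_\ell\ -\ C(\varepsilon_2+\varepsilon_3), \]
with $C$ depending only on $\|f\|_\infty$. Taking $\liminf_k$, and using $\liminf_kP^{n_0+k}f(z)=c_\ell$ together with $\liminf_kP^k(z,O_z)=\delta$, yields $(\gamma-2\varepsilon_1)\delta\le C(\varepsilon_2+\varepsilon_3)$; letting $\varepsilon_2,\varepsilon_3\to0$ gives $(\gamma-2\varepsilon_1)\delta\le0$, contradicting $\gamma-2\varepsilon_1>0$ and $\delta>0$.

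The routine parts are Steps 1--2 and the $\varepsilon$-estimate at the end of Step 3; the genuinely delicate point — and the only place where the $\liminf$ (rather than $\limsup$) in (\ref{eqliminflower}), i.e.\ the absence of periodicity, is essentially used — is the tightness of the plain sequence $\{P^n(z,\cdot)\}_{n\ge1}$, which is what lets one confine the oscillation of $n\mapsto P^nf(z)$ to a fixed compact set on which equicontinuity furnishes uniform control. I expect a careful proof of that tightness, particularly when $X$ is not $\sigma$-compact, to be the main obstacle.
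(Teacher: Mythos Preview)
The paper does not prove Proposition~\ref{propCha3} itself (it is quoted from \cite{Szarektracer}); the relevant comparison is with the paper's proof of the strictly stronger Theorem~\ref{thmAsyStaSup}, of which Proposition~\ref{propCha3} is an immediate corollary (equicontinuity implies eventual continuity, and (\ref{eqliminflower}) implies aperiodicity).

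Your Steps~1--2 are fine, and the final $\varepsilon$-splitting in Step~3 is correct \emph{once} tightness of $\{P^n(z,\cdot)\}_{n\ge1}$ is available. The problem is precisely the point you flag: the heuristic you give for that tightness does not close. Equicontinuity gives uniform control of $y\mapsto P^n g(y)$ near $z$ for a \emph{fixed} $g\in\mathrm{C_b}$, not for the indicator of the complement of an arbitrary compact; if you replace $\mathbf{1}_{K^c}$ by a Lipschitz $g$ with $\mathbf{1}_{(K^\alpha)^c}\le g\le\mathbf{1}_{K^c}$, your own Step~2 tells you only that $\liminf_n P^n g(z)=c_\ell(g)\le\mu(g)$ and $\limsup_n P^n g(z)=c_h(g)\ge\mu(g)$. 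To deduce $c_\ell(g)$ is small (which is what ``no escaping mass'' means) you would need $c_h(g)=c_\ell(g)$ for such $g$ --- exactly the statement you are trying to prove. The ``propagation via return times'' idea only yields $P^n(z,K^\alpha)\gtrsim\delta$ for all large $n$, not $P^n(z,K^\alpha)\ge 1-\varepsilon$, because you control at most the $O_z$-portion of the mass of $P^k(z,\cdot)$; the remaining $1-\delta$ mass can sit anywhere. So as written the argument is circular, and I do not see a cheap repair.

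The paper's route bypasses this entirely. For $f\in\mathrm{C_b}$ with $\mu(f)=0$ it uses the elementary monotonicity $\int|P^{n+m}f|\,d\mu\le\int P^m|P^nf|\,d\mu=\int|P^nf|\,d\mu$ to get a limit $v$, extracts (via Lemma~\ref{lemEveConSub}) a subsequential pointwise limit $g$ with $\int|g|\,d\mu=\int|P^ng|\,d\mu=v$ for every $n$, and then shows $g$ cannot change sign on $X_\mu$: if it did, aperiodicity of $z$ (a consequence of (\ref{eqliminflower})) together with Lemma~\ref{lemEveConAll} produces a single time $l$ and a neighbourhood $V$ of $z$ on which $P^l(\cdot,\{g>0\})>0$ and $P^l(\cdot,\{g<0\})>0$, forcing $|P^lg|<P^l|g|$ on $V$ and hence $\int|P^lg|\,d\mu<\int|g|\,d\mu$, a contradiction. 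Constant sign plus $\mu(g)=\mu(f)=0$ gives $v=0$, and eventual continuity upgrades $\int|P^nf|\,d\mu\to0$ to pointwise convergence on $X_\mu$. No tightness of $\{P^n(z,\cdot)\}$ is ever invoked; the argument lives entirely in $L^1(\mu)$ until the very last step.
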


This result can be essentially improved for general Feller semigroups. First of all, let's introduce a notion of {\em aperiodicity}, which is much weaker than (\ref{eqliminflower}). Say $z$ is aperiodic, if for any $O_z$, there exists $N$ such that $P^n(z,O_z) > 0$ for all $n\geqslant N$. We prove that

\begin{theorem} \label{thmAsyStaSup}
Suppose $P^n$ is eventually continuous on $X_\mu$. The next two statements are equivalent:
\begin{enumerate}
 \item[$(1)$] $P^n(x,\cdot) \overset{\mathrm{w}}{\longrightarrow} \mu$ for all $x\in X_\mu$.
 \item[$(2)$] $X_\mu$ contains an aperiodic point.
\end{enumerate}
\end{theorem}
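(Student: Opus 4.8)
The plan is to prove the two implications separately, after reducing to the case where $\mu$ has full support. Since $\mu P=\mu$, one has $\int P(x,X_\mu^c)\,\mu(dx)=\mu(X_\mu^c)=0$, so $P(x,X_\mu)=1$ for $\mu$-a.e.\ $x$; because $x\mapsto P(x,X_\mu)$ is upper semicontinuous (Feller property, $X_\mu$ closed), the set $\{x:P(x,X_\mu)=1\}$ is closed and of full $\mu$-measure, hence contains $\mathrm{Supp}\,\mu=X_\mu$. Thus $X_\mu$ is $P$-invariant and, lifting test functions by a Tietze extension, $P$ restricts to a Feller kernel on the Polish space $X_\mu$ for which $\mu$ is ergodic with full support; since all hypotheses and both assertions concern only $X_\mu$, I may assume $X=X_\mu$ throughout. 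The implication $(2)\Leftarrow(1)$ is then immediate: for $z\in X_\mu$ and any neighbourhood $O_z$ one has $\mu(O_z)>0$, so $P^n(z,\cdot)\overset{\mathrm{w}}{\longrightarrow}\mu$ forces $\liminf_n P^n(z,O_z)\ge\mu(O_z)>0$ by the portmanteau theorem, i.e.\ every $z\in X_\mu$ is aperiodic.

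For $(2)\Rightarrow(1)$ I fix a countable convergence-determining family $\mathcal D\subset\mathrm{C_b}$ with $\mathcal D=-\mathcal D$; it suffices to prove $P^nf(x)\to\mu(f)$ for all $x\in X_\mu$ and $f\in\mathcal D$. \emph{Step A} upgrades the mean ergodic theorem from $\mu$-a.e.\ to everywhere: by ergodicity, $Q^mf(x)\to\mu(f)$ off a $\mu$-null set $N_f$ for each $f\in\mathcal D$, and with $N=\bigcup_f N_f$, the eventual continuity of $Q^m$ at any $x\in X_\mu$ (which follows from that of $P^n$, cf.\ $(\ref{eqPEvCo})\Rightarrow(\ref{eqQEvCo})$) lets one pick $y\in O_x\setminus N$ in a small neighbourhood $O_x$ and conclude $\limsup_m|Q^mf(x)-\mu(f)|\le\varepsilon$; hence $Q^m(x,\cdot)\overset{\mathrm{w}}{\longrightarrow}\mu$ for \emph{every} $x\in X_\mu$. \emph{Step B} records that the bounded Borel functions $\phi:=\limsup_n P^nf$ and $\psi:=\liminf_n P^nf$ are continuous on $X_\mu$ (again by eventual continuity of $P^n$), satisfy $P\phi\ge\phi$ and $P\psi\le\psi$ (reverse Fatou / Fatou), hence $P\phi=\phi$ and $P\psi=\psi$ $\mu$-a.e.\ (since $\int(P\phi-\phi)\,d\mu=0$ with $P\phi-\phi\ge0$, and likewise for $\psi$), hence everywhere, as $P\phi,P\psi$ are continuous and $\mu$ has full support. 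Therefore $Q^m\phi=\phi$ and $Q^m\psi=\psi$, and Step A forces $\phi,\psi$ to be the constants $\mu(\phi),\mu(\psi)$; the Cesàro sandwich $\psi\le\liminf_m Q^mf\le\limsup_m Q^mf\le\phi$ with Step A gives $\mu(\psi)\le\mu(f)\le\mu(\phi)$. Thus $P^nf(x)\to\mu(f)$ for all $x$ as soon as one shows $\mu(\phi)=\mu(\psi)$, and this is the only place aperiodicity enters.

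\emph{Step C} is the crux. Let $z_0\in X_\mu$ be aperiodic and suppose $c:=\mu(\phi)>\mu(f)$. Since $\phi\equiv c$ on $X_\mu$ we have $\limsup_n P^nf(x)=c$ for every $x$, whereas $\mu(P^nf)=\mu(f)$ for all $n$; the contradiction should come from producing one large index $n$ at which $P^nf>c-o(1)$ on a set of $\mu$-measure arbitrarily close to $1$, for then integration against $\mu$ yields $\mu(f)=\mu(P^nf)>\mu(f)$. The ingredients to combine are: aperiodicity, which makes the orbit of $z_0$ meet \emph{every} neighbourhood of $z_0$ at \emph{all} sufficiently large times; the lower semicontinuity of $y\mapsto P^n(y,O)$ together with the Feller property, used to spread this from $z_0$ to nearby points; Step A, which gives $\limsup_n P^n(z_0,O)\ge\mu(O)>0$ for each neighbourhood $O$; and the continuity, hence constancy, of $\phi$, which lets the information $P^{n_k}f(z_0)\to c$ be transported across $X_\mu$. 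In effect this is a Tauberian step promoting the Cesàro convergence $Q^mf(z_0)\to\mu(f)$ of Step A to the ordinary convergence $P^nf(z_0)\to\mu(f)$, with aperiodicity as the Tauberian hypothesis excluding oscillation; applying it to $-f\in\mathcal D$ as well then delivers $\mu(\psi)=\mu(f)$ and finishes the proof.

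I expect Step C to be the main obstacle. The difficulty is that aperiodicity supplies only \emph{positive}, not uniformly positive, return probabilities, and eventual continuity is not uniform in time, so the naive bound $P^{n+j}f(z_0)\ge(c-o(1))\,P^j(z_0,U)-\|f\|_\infty\bigl(1-P^j(z_0,U)\bigr)$, obtained by pushing a small neighbourhood $U$ of $z_0$ forward, loses too much mass through the last term. The remedy should be to bootstrap aperiodicity together with the mean ergodic theorem so as to replace ``positive at every large time'' by ``bounded below on a set of times of positive density'', and to run the estimate over a block of consecutive times rather than a single one, extracting a favourable index from the average.
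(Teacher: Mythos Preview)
Your reduction to $X=X_\mu$ and Steps~A--B are correct and clean: eventual continuity does make $\phi=\limsup_nP^nf$ and $\psi=\liminf_nP^nf$ continuous, $P$-harmonic, hence constant on $X_\mu$, with $c'=\mu(\psi)\le\mu(f)\le\mu(\phi)=c$. The difficulty is exactly where you locate it: Step~C is a plan, not a proof, and as written it does not close. Knowing $\limsup_nP^nf(x)=c$ for \emph{every} $x$ gives, for each $x$, a subsequence of times along which $P^nf(x)$ is near $c$, but these subsequences may be disjoint as $x$ varies; there is no mechanism in your outline to produce a \emph{single} large $n$ with $P^nf>c-o(1)$ on a set of nearly full $\mu$-measure, and the density/block heuristic in your last paragraph does not supply one. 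Aperiodicity only says $P^j(z_0,O_{z_0})>0$ for all large $j$; it gives no lower bound, so the ``block average'' estimate you propose still leaks mass through the complement exactly as in the naive bound you reject. In the periodic two-point example your Steps~A--B already yield $c=1$, $c'=0$, so whatever closes the gap must use aperiodicity in a way your sketch does not yet isolate.

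The paper's argument is organized differently at precisely this point, and the reorganization is what makes aperiodicity bite. Rather than working with the envelope functions $\phi,\psi$ (which, once shown constant, carry no further structure), the paper fixes $f$ with $\mu(f)=0$, observes that $n\mapsto\int|P^nf|\,d\mu$ is nonincreasing with limit $v$, and uses an Ascoli-type lemma (eventual continuity on nested compacts of nearly full measure) to extract a subsequence $P^{n_k}f\to g$ $\mu$-a.e.\ with $g$ continuous on $X_\mu$ and $P^ng=\lim_kP^{n+n_k}f$ $\mu$-a.e. This gives the key identity $\int|P^ng|\,d\mu=v=\int|g|\,d\mu$ for \emph{all} $n$. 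Aperiodicity is then used once, in a synchronization trick: if $g$ took both signs, pick open $O_\pm$ with $\pm g>0$ on $O_\pm$ and $\mu(O_\pm)>0$; reachability (your Step~A gives $P^{k_\pm}(z_0,O_\pm)>0$) plus aperiodicity at $z_0$ lets one pad with returns to a neighbourhood of $z_0$ so that a \emph{common} time $l$ satisfies $P^l(z_0,O_+)>0$ and $P^l(z_0,O_-)>0$; Feller spreads this to a neighbourhood $V$ of $z_0$, whence $|P^lg|<P^l|g|$ on $V$ and $\int|P^lg|\,d\mu<\int|g|\,d\mu$, a contradiction. Thus $g$ has constant sign, so $v=\int|g|\,d\mu=|\mu(g)|=|\mu(f)|=0$, and Fatou plus eventual continuity gives $P^nf\to0$ everywhere on $X_\mu$.

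The essential idea you are missing is this sign-rigidity step for a genuine subsequential limit $g$ (not for $\phi$ or $\psi$, which are already constants and hence trivially single-signed). Your framework can be completed, but only by importing that step: extract $g$ along a subsequence, show $\int|P^ng|\,d\mu$ is constant in $n$, and run the synchronization argument. The ``Tauberian'' intuition is right in spirit---aperiodicity kills oscillation---but the mechanism is cancellation in $|P^lg|$ versus $P^l|g|$, not a density-of-good-times estimate.
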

\begin{remark}
The asymptotic stability on $X_\mu$ implies a restricted eventual continuity on the subspace $(X_\mu, \mathcal{B}_{|X_\mu}, P_{|X_\mu})$ with relative topology. Hence, the eventual continuity is necessary to the asymptotic stability on $(X_\mu, \mathcal{B}_{|X_\mu}, P_{|X_\mu})$.
\end{remark}

It's known that $X_\mu = X$ can be derived from the {\em topological irreducibility}, i.e. for any $x,y\in X$ and any $O_y$, there exists some $n$ with $P^n(x,O_y)>0$. However, sometimes the topological irreducibility might be not true. In general, we provide a criterion for the global asymptotic stability.

\begin{theorem} \label{thmAsyStaGloNew}
Suppose that $P^n$ is eventually continuous on $X_\mu$ and $P^n(x,\cdot) \overset{\mathrm{w}}{\longrightarrow} \mu$ for all $x\in X_\mu$. The next two statements are equivalent:
\begin{enumerate}
\item[$(1)$] $P^n(x,\cdot) \overset{\mathrm{w}}{\longrightarrow} \mu$ for all $x\in X$.
\item[$(2)$] there exists $z\in X_\mu$ such that for any $O_z$, there exists $\eta > 0$ satisfying $\inf\limits_{x\in X}\limsup\limits_{n\to \infty} P^n(x, O_z) \geqslant \eta.$
\end{enumerate}
\end{theorem}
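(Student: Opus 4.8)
For $(1)\Rightarrow(2)$ the plan is to read off the required uniform lower bound from the portmanteau theorem. I would pick any $z\in X_\mu=\mathrm{Supp}\,\mu$ (nonempty), so that $\mu(O_z)>0$ for every open $O_z\ni z$; since $P^n(x,\cdot)\overset{\mathrm{w}}{\longrightarrow}\mu$ and $O_z$ is open, $\liminf_n P^n(x,O_z)\geqslant\mu(O_z)$ for every $x\in X$, and hence $\inf_{x\in X}\limsup_n P^n(x,O_z)\geqslant\mu(O_z)=:\eta>0$. This $z$ and these $\eta$ witness $(2)$.

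The substance is $(2)\Rightarrow(1)$. Since bounded Lipschitz functions form a convergence-determining separable subalgebra $\mathcal F$, it suffices to fix $f\in\mathcal F$ and an arbitrary $x\in X$ and to show $P^nf(x)\to\mu(f)$. Set
\[ L(y):=\limsup_{n\to\infty}|P^nf(y)-\mu(f)|\in[0,2\|f\|_\infty], \qquad L^*:=\sup_{y\in X}L(y); \]
$L$ is Borel since every $P^nf\in\mathrm{C_b}$ by the Feller property. The plan rests on two facts about $L$. First, $L$ is subharmonic: from $|P^{n+1}f(y)-\mu(f)|\leqslant\int|P^nf(w)-\mu(f)|\,P(y,dw)$ and the reverse Fatou lemma (the integrands are dominated by $2\|f\|_\infty$) one gets $L\leqslant PL$, and hence $L\leqslant P^mL$ for every $m\geqslant0$ by positivity of $P$. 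Second, $L(z)=0$ by the standing hypothesis $P^n(z,\cdot)\overset{\mathrm{w}}{\longrightarrow}\mu$, and eventual continuity of $P^n$ at $z$ gives, for each $\varepsilon>0$, a neighborhood $O_z$ with $\sup_{y\in O_z}\limsup_n|P^nf(y)-P^nf(z)|\leqslant\varepsilon$; together with $L(z)=0$ and $\limsup_n(a_n+b_n)\leqslant\limsup_na_n+\limsup_nb_n$ this forces $\sup_{y\in O_z}L(y)\leqslant\varepsilon$, so that $\inf_{O_z}\sup_{y\in O_z}L(y)=0$.

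With these two facts, I would fix any neighborhood $O_z$ of $z$, take the associated $\eta=\eta(O_z)>0$ from $(2)$, fix $x\in X$, and choose a subsequence $m_j\to\infty$ along which $P^{m_j}(x,O_z)\to c_x:=\limsup_nP^n(x,O_z)\geqslant\eta$. Splitting the integral over $O_z$ and its complement,
\[ L(x)\leqslant P^{m_j}L(x)\leqslant\Big(\sup_{O_z}L\Big)P^{m_j}(x,O_z)+L^*\big(1-P^{m_j}(x,O_z)\big), \]
and letting $j\to\infty$ yields $L(x)\leqslant L^*-c_x\big(L^*-\sup_{O_z}L\big)\leqslant L^*-\eta\big(L^*-\sup_{O_z}L\big)$. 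Taking the supremum over $x\in X$ gives $\eta\big(L^*-\sup_{O_z}L\big)\leqslant0$, i.e. $L^*\leqslant\sup_{O_z}L$; since $O_z$ was arbitrary, the second fact gives $L^*=0$, hence $L\equiv0$ and $P^nf(x)\to\mu(f)$ for all $x$, which is $(1)$.

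The hard part is conceptual rather than computational: recognizing that $(2)$ only produces mass near $z$ along a subsequence of times that may depend on $x$, so its Cesàro analogue (a statement of $(\mathcal L_S)$-type for $Q^m$) need not hold and one cannot reduce to the averaged kernels; the way around this is precisely the subharmonicity $L\leqslant P^mL$ valid for each individual $m$, evaluated along the times realizing $\limsup_nP^n(x,O_z)$. The second delicate point is the step $\sup_{y\in O_z}L(y)\to0$ as $O_z\downarrow\{z\}$: although $L$ is only a $\limsup$ of continuous functions and so need not be upper semicontinuous, it inherits this one-sided smallness at $z$ from eventual continuity of $P^n$ at $z\in X_\mu$, with the neighborhoods understood in the ambient space $X$ rather than merely in $X_\mu$ --- this is where that hypothesis genuinely enters, and checking it carefully is the technical crux.
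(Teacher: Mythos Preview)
Your proof is correct and takes a somewhat more direct route than the paper's. The paper splits $(2)\Rightarrow(1)$ into two stages: first it proves Lemma~\ref{lemAsyExitProb}, which says $\lim_n P^n(x,X-X_\mu^\varepsilon)=0$ for every $x$ and $\varepsilon$, by essentially the same subharmonicity-plus-reverse-Fatou contradiction you run (their $\gamma$ is your $L^*$ for the particular test function $f\approx\mathbf{1}_{X-X_\mu^\varepsilon}$); then it combines that lemma with eventual continuity at \emph{every} point of $X_\mu$, via a compact $K\subset X_\mu$ and a uniform $\delta$, to handle a general $f$. You instead apply the subharmonicity argument directly to an arbitrary $f\in\mathcal F$, which collapses both stages into one and---as you implicitly observe---uses eventual continuity only at the single point $z$ furnished by $(2)$, not on all of $X_\mu$. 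So your version is slightly sharper in its hypotheses and avoids the intermediate lemma; the paper's decomposition, on the other hand, isolates the geometric statement ``mass is asymptotically trapped near $X_\mu$'' (Lemma~\ref{lemAsyExitProb}), which has some independent interest. Your closing remark that the neighborhoods $O_z$ in the eventual-continuity definition are taken in the ambient space $X$ is exactly the right point to flag, and it matches the paper's convention.
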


We give another criterion for the case that one can prove a local topological irreducibility around some $z\in X_\mu$ (thus $z$ becomes an inner point in $X_\mu$).
\begin{theorem} \label{thmAsyStaGlo}
Suppose that $P^n$ is eventually continuous on $X_\mu$ and $P^n(x,\cdot) \overset{\mathrm{w}}{\longrightarrow} \mu$ for all $x\in X_\mu$. Suppose also
\begin{enumerate}
\item[$(\mathbf{A}1)$] there exists an inner point $z\in X_\mu$;
\item[$(\mathbf{A}2)$] for any bounded set $A$ and any $O_z$, there exists $\eta>0$ such that for any $x\in A$, there exists $k$ such that $P^k (x,O_z) \geqslant \eta$;
\item[$(\mathbf{A}3)$] for any $x\in X$ and $\varepsilon>0$, there exists a bounded set $B$ and a subsequence $n_i\to \infty$ such that $\limsup\limits_{i\to\infty} P^{n_i}(x,B) \geqslant 1-\varepsilon$.
\end{enumerate}
Then $P^n(x,\cdot) \overset{\mathrm{w}}{\longrightarrow} \mu$ for all $x\in X$.
\end{theorem}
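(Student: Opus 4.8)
The plan is to reduce the global convergence to the already-established convergence on $X_\mu$, by showing that for an arbitrary $x\in X$ and an arbitrary bounded Lipschitz test function $f$, the averaged quantities $P^nf(x)$ get trapped near $\mu(f)$. Since $z$ is an inner point of $X_\mu$ by $(\mathbf{A}1)$, we may fix a small ball $O_z\subset X_\mu$; on this ball the eventual continuity of $P^n$ applies and moreover, because $O_z\subset X_\mu$ and convergence to $\mu$ already holds on $X_\mu$, we know $\lim_{n\to\infty}P^nf(y)=\mu(f)$ for every $y\in O_z$. Shrinking $O_z$ using eventual continuity at $z$, we can arrange that $\limsup_{n\to\infty}|P^nf(y)-\mu(f)|$ is as small as we like, uniformly for $y$ in a suitable neighborhood — this is the mechanism that turns pointwise convergence on $X_\mu$ plus eventual continuity into a quasi-uniform convergence near $z$.

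Next I would use the Chapman--Kolmogorov decomposition to propagate this control out to $x$. Given $\varepsilon>0$, condition $(\mathbf{A}3)$ produces a bounded set $B$ and times $n_i\to\infty$ with $\liminf_i P^{n_i}(x,B)\geqslant 1-\varepsilon$ (equivalently $\limsup$; one adjusts $\varepsilon$). On $B$, condition $(\mathbf{A}2)$ gives a uniform $\eta>0$ and, for each $w\in B$, a time $k=k(w)$ with $P^k(w,O_z)\geqslant\eta$. The idea is then to write, for large $n$ of the form $n_i+k+\ell$,
\[
  P^{n}f(x) = \int_X P^{n_i}(x,dw)\,\int_X P^{k}(w,dw')\,P^{\ell}f(w'),
\]
split the inner $dw'$ integral over $O_z$ and its complement, and bound the $O_z$-part using the quasi-uniform convergence near $z$ and the complement-part crudely by $\|f\|_\infty$. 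The measure $P^{n_i}(x,\cdot)$ concentrates mass $1-\varepsilon$ on $B$ by $(\mathbf{A}3)$; on $B$ the mass $\eta$ reaches $O_z$ after $k(w)$ steps; and from $O_z$ the remaining $\ell$ steps bring $P^\ell f$ within $\varepsilon$ of $\mu(f)$. Iterating the ``$\eta$ of the remaining mass lands in $O_z$'' argument — i.e. treating the $1-\eta$ leftover as a new start and applying $(\mathbf{A}2)$ again — lets one pump essentially all the mass into $O_z$-reaching trajectories, so that $\limsup_{n\to\infty}|P^nf(x)-\mu(f)|$ can be forced below any prescribed bound. Since $f$ and $\varepsilon$ were arbitrary, this yields $P^n(x,\cdot)\overset{\mathrm{w}}{\longrightarrow}\mu$.

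The main obstacle is the non-uniformity of the hitting time $k(w)$ over $w\in B$ in step two: unlike in the aperiodic/equicontinuous setting, $(\mathbf{A}2)$ only gives a uniform \emph{probability} $\eta$, not a uniform \emph{time}, so the times $n=n_i+k(w)+\ell$ depend on $w$ and cannot be synchronized. The way around this is to avoid requiring a common time: one works with $\limsup$ and $\liminf$ throughout, observes that for each fixed target length $\ell$ the set $\{w\in B: k(w)\leqslant L\}$ has, for $L$ large, $P^{n_i}(x,\cdot)$-mass close to $1-\varepsilon$ (by countable additivity, since $k(w)<\infty$ for every $w$), and handles the finitely many values $k\leqslant L$ separately, using that the bound near $z$ holds for \emph{all} sufficiently large $\ell$ simultaneously. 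A secondary technical point is checking that the ``land $\eta$ in $O_z$, repeat on the leftover'' recursion genuinely exhausts the mass — one must verify $(\mathbf{A}2)$ can be re-applied to the conditional leftover measures, which are again supported (up to $\varepsilon$) in a bounded set, so the geometric decay $(1-\eta)^j\to 0$ does the job. Once these measurability/uniformity bookkeeping issues are handled, the argument is a fairly standard coupling-free ergodic pumping.
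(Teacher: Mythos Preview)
Your approach differs substantially from the paper's, and there is a genuine gap in the iteration step.

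The paper does not work with the test function $f$ directly. It first proves a lemma: $\lim_n P^n(x, X_\mu) = 1$ for all $x \in X$. The key observation is that $\varphi_n(x) := P^n(x, X_\mu)$ is \emph{monotone increasing} in $n$ (because $X_\mu$ is $P$-invariant, Lemma~\ref{lemFellInv}), so $\varphi := \lim_n \varphi_n$ exists and, by monotone convergence, is \emph{exactly} $P$-harmonic: $\varphi = P^m\varphi$ for all $m$. One then shows $\varphi \geq \beta_A > 0$ on any bounded $A$ (this uses $(\mathbf{A}1)$, $(\mathbf{A}2)$, and eventual continuity to funnel mass through a neighborhood of $z$ into some $U \subset X_\mu$). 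Now for a fixed $x$ with $\varphi(x)<1$, set $\varepsilon = (1-\varphi(x))/2$; condition $(\mathbf{A}3)$ gives a bounded $B$ and times $n_i$ with $P^{n_i}(x,B)\geq 1-\varepsilon$, and the harmonic \emph{identity} yields
\[
  \varphi(x) \;=\; P^{n_i}\varphi(x) \;\geq\; \varphi_{n_i}(x) + \beta_B\bigl(P^{n_i}(x,B)-\varphi_{n_i}(x)\bigr) \;\xrightarrow{i\to\infty}\; \varphi(x) + \beta_B\bigl(1-\varepsilon-\varphi(x)\bigr) \;>\; \varphi(x),
\]
a contradiction. Once $\varphi \equiv 1$, weak convergence for arbitrary $x$ follows in two lines from the assumed convergence on $X_\mu$.

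The problem with your plan is the claim that the leftover measures are ``again supported (up to $\varepsilon$) in a bounded set''. After one pump, the leftover sits at positions in $X\setminus O_z$ at the scattered times $n_i + k(w)$; nothing in the hypotheses forces this to concentrate in any bounded set. Condition $(\mathbf{A}3)$ is purely pointwise --- it hands you a bounded $B_x$ and times $n_i(x)$ \emph{depending on the starting point} --- so it does not transfer to the leftover measure. And since $(\mathbf{A}2)$ only gives $\eta$ depending on the bounded set, you cannot bootstrap: each pass would require a new bounded set with a new (possibly degenerating) $\eta$, and the geometric decay $(1-\eta)^j$ is illusory. If instead you try a $\sup$-argument with $\gamma=\sup_x\limsup_n|P^nf(x)-\mu(f)|$, you only have the \emph{sub}harmonic inequality $\psi\leq P^m\psi$; combined with $\psi\leq (1-\eta_B)\gamma$ on $B$ it yields $\psi(x)\leq \gamma - \eta_B(1-\varepsilon)\gamma$ --- strictly below $\gamma$, but not zero, and pushing $x$ toward the supremum lets $\eta_B \to 0$. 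The paper's passage to $\varphi = \lim P^n(\cdot,X_\mu)$ converts the inequality into an equality and, crucially, gives $\varphi = 1$ on $X_\mu$ rather than merely a bound, which is exactly what makes the one-shot contradiction go through for each fixed $x$ without any iteration.
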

\begin{remark}
$(\mathbf{A}2)$ and $(\mathbf{A}3)$ are necessary. The new ingredient is one doesn't need to find a uniform lower bound for $\limsup\limits_{n\to \infty} P^n(x,O_z)$ as (2) in Theorem \ref{thmAsyStaGloNew}.
\end{remark}

\bigskip

This paper is arranged as follows. In Section \ref{invdis}, we discuss the existence and uniqueness of invariant distribution. Sections \ref{ASSupp} and \ref{ASGlo} are respectively devoted to the asymptotic stability on the ergodic support and whole state space. In Section \ref{2DNS}, we would like to simply revisit the unique ergodicity and prove the asymptotic stability of stochastic 2D Navier-Stokes equations according to our criteria, based on partial estimates from \cite{Mattingly} and \cite{Hairer}.

Let's point out, all the notions and results in Sections \ref{invdis}, \ref{ASSupp} and \ref{ASGlo} can be freely extended to continuous-time semigroup $P_t$ correspondingly.

\bigskip

\section{Invariant distributions}
 \label{invdis}

In this section, endow $(X,\mathcal{B})$ with a metric $\rho$ since $X$ is metrizable. Denote by $B(x,r)=\{y: \rho(x,y)< r\}$ the open ball of radius $r$ centered at $x$, and $A^\varepsilon = \{x: \rho(x,A)< \varepsilon\}$ the $\varepsilon$-neighborhood of a set $A$.

\subsection{Some lemmas}

\begin{lemma} \label{lemDisj}
Let $\{A_n^\varepsilon\}_{n\geqslant 1}$ be a sequence of mutually disjoint $\varepsilon$-neighborhoods of $A_n$. Then, for any compact set $C$, there exists $N>0$ such that for all $n\geqslant N$
    \[ C \cap A_n^{\varepsilon/2} = \emptyset .  \]
\end{lemma}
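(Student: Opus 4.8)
The plan is to argue by contradiction using the sequential compactness of $C$. Suppose the conclusion fails for some compact set $C$. Then there is a strictly increasing sequence of indices $n_1 < n_2 < \cdots$ together with points $x_k \in C \cap A_{n_k}^{\varepsilon/2}$ for every $k$. Since $C$ is compact, and hence sequentially compact, I would pass to a subsequence along which $x_k$ converges to some $x \in C$; in particular that subsequence is Cauchy, so after relabelling we may assume $\rho(x_j, x_k) < \varepsilon/2$ for all $j, k$ in it, while still $n_j \neq n_k$ whenever $j \neq k$.

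Next, fix two distinct indices $j \neq k$ from this subsequence. From $x_j \in A_{n_j}^{\varepsilon/2}$ we have $\rho(x_j, A_{n_j}) < \varepsilon/2$, and combining with $\rho(x_k, x_j) < \varepsilon/2$ and the triangle inequality for the point-to-set distance gives $\rho(x_k, A_{n_j}) \leqslant \rho(x_k, x_j) + \rho(x_j, A_{n_j}) < \varepsilon$, i.e. $x_k \in A_{n_j}^{\varepsilon}$. On the other hand $x_k \in A_{n_k}^{\varepsilon/2} \subseteq A_{n_k}^{\varepsilon}$. Hence $x_k \in A_{n_j}^{\varepsilon} \cap A_{n_k}^{\varepsilon}$ with $n_j \neq n_k$, contradicting the assumed mutual disjointness of the family $\{A_n^{\varepsilon}\}_{n\geqslant 1}$. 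This contradiction proves that only finitely many $A_n^{\varepsilon/2}$ can meet $C$, which is the assertion.

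I do not expect any genuine obstacle here; the only point to watch is extracting, from the convergent (hence Cauchy) subsequence, two distinct indices whose witnessing points lie within $\varepsilon/2$ of each other, and that is immediate. An alternative route, avoiding sequential compactness, is to invoke total boundedness of $C$ directly: cover $C$ by finitely many balls of radius $\varepsilon/4$, and if infinitely many $A_n^{\varepsilon/2}$ met $C$, the pigeonhole principle would force two of the witnessing points into a common such ball, hence within $\varepsilon/2$, and the same triangle-inequality computation yields the contradiction.
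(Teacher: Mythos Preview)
Your proof is correct and follows essentially the same approach as the paper: argue by contradiction, extract a convergent subsequence from the compact set $C$, and use the triangle inequality to place a single point in two distinct $A_n^\varepsilon$. The only cosmetic difference is that the paper uses the limit point $x$ of the subsequence (showing $x\in A_n^\varepsilon$ for every $n$ with $\rho(x_n,x)<\varepsilon/2$), whereas you use two nearby terms of the Cauchy subsequence; both yield the same contradiction.
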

\begin{proof}
Assume there exist $x_n \in C \cap A_n^{\varepsilon/2}$ for infinitely many $n$. Without loss of
generality, assume that $x_n$ tends to some $x\in C$. Then, we
have $x\in A_n^\varepsilon$ for any $n$ with $\rho(x_n,x) < \varepsilon/2$, which contradicts the mutual disjointness.
\end{proof}

\begin{lemma} \label{lemDeltaGamma}
Let $\{A_n^\varepsilon\}_{n\geqslant 1}$ be a sequence of mutually disjoint $\varepsilon$-neighborhoods of compact $A_n$. Let $x\in X$ and $m_i \in \mathbb{N}$. Then for any $\eta>0$, there exists $N >0$ such that for all $n\geqslant N$
  \[  \liminf\limits_{i\to \infty} Q^{m_i}(x, A_n^{\varepsilon/4}) \leqslant \eta. \]
\end{lemma}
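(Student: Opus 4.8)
The plan is to reduce the statement to the elementary fact that a convergent series of nonnegative terms has a vanishing tail, the convergence being furnished by Fatou's lemma together with the disjointness hypothesis.

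First I would note that since $A_n^{\varepsilon/4}\subset A_n^{\varepsilon}$, the smaller neighbourhoods $A_n^{\varepsilon/4}$ inherit the mutual disjointness of the $A_n^{\varepsilon}$; each is open, hence Borel, and so is their union $\bigcup_{n\geqslant 1}A_n^{\varepsilon/4}$. Then, for each fixed $i$, since $Q^{m_i}(x,\cdot)$ is a probability measure, countable additivity over this disjoint family gives
\[
   \sum_{n=1}^{\infty} Q^{m_i}\bigl(x, A_n^{\varepsilon/4}\bigr)
   \;=\; Q^{m_i}\Bigl(x,\ \bigcup_{n\geqslant 1} A_n^{\varepsilon/4}\Bigr)
   \;\leqslant\; Q^{m_i}(x,X) \;=\; 1 .
\]
Applying Fatou's lemma with respect to the counting measure on $\mathbb{N}$ (to the functions $n\mapsto Q^{m_i}(x,A_n^{\varepsilon/4})$ as $i\to\infty$) then yields
\[
   \sum_{n=1}^{\infty} \liminf_{i\to\infty} Q^{m_i}\bigl(x, A_n^{\varepsilon/4}\bigr)
   \;\leqslant\; \liminf_{i\to\infty} \sum_{n=1}^{\infty} Q^{m_i}\bigl(x, A_n^{\varepsilon/4}\bigr)
   \;\leqslant\; 1 .
\]
From here the conclusion is immediate: writing $a_n:=\liminf_{i\to\infty}Q^{m_i}(x,A_n^{\varepsilon/4})$, the nonnegative series $\sum_n a_n$ converges, hence $a_n\to 0$, so for the prescribed $\eta>0$ there is $N$ with $a_n\leqslant\eta$ for all $n\geqslant N$, which is exactly the asserted estimate. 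If one prefers to avoid Fatou, the same bound follows by a direct pigeonhole argument: were $a_{n_j}>\eta$ along a subsequence $n_j\to\infty$, choose $J$ with $J\eta>1$ and then $i$ so large that $Q^{m_i}(x,A_{n_j}^{\varepsilon/4})>\eta$ for every $j\leqslant J$ simultaneously (only finitely many $\liminf$ conditions are involved); summing over these disjoint sets gives total mass exceeding $1$, a contradiction.

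I do not expect a genuine obstacle in this lemma; the only points requiring care are the legitimacy of the $\liminf$/$\sum$ interchange (handled by Fatou, or the pigeonhole variant) and the trivial observation that shrinking neighbourhoods from $\varepsilon$ to $\varepsilon/4$ preserves disjointness. Note that neither the compactness of the $A_n$ nor Lemma \ref{lemDisj} is actually used here; those hypotheses are presumably recorded with an eye toward the tightness arguments in which this lemma will subsequently be applied.
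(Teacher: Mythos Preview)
Your proof is correct and in fact cleaner than the paper's. The paper argues via a saturation trick: it sets $\delta=\sup_{C\text{ compact}}\liminf_i Q^{m_i}(x,C^{\varepsilon/4})$, picks a near-optimal compact $C$ with $\liminf_i Q^{m_i}(x,C^{\varepsilon/4})\geqslant\gamma$ and $\delta-\gamma\leqslant\eta$, invokes Lemma~\ref{lemDisj} to separate $C^{\varepsilon/4}$ from $A_n^{\varepsilon/4}$ for all large $n$, and then observes that adjoining $A_n$ to $C$ cannot push the $\liminf$ above $\delta$, forcing $\liminf_i Q^{m_i}(x,A_n^{\varepsilon/4})\leqslant\delta-\gamma\leqslant\eta$. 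This route genuinely uses both the compactness of the $A_n$ (so that $C\cup A_n$ remains a competitor in the supremum defining $\delta$) and Lemma~\ref{lemDisj}. Your argument bypasses all of that: direct disjointness of the $A_n^{\varepsilon/4}$ plus Fatou for series gives $\sum_n\liminf_i Q^{m_i}(x,A_n^{\varepsilon/4})\leqslant 1$, whence the terms tend to zero. As you rightly note, neither the compactness hypothesis nor Lemma~\ref{lemDisj} is needed. In fact your argument is essentially the paper's own one-line proof of the weaker Lemma~\ref{lemLiminf}, observed to already yield the stronger conclusion (the $\liminf$ is small for \emph{all} large $n$, not merely for some $n$); the paper does not seem to have noticed that the same reasoning subsumes Lemma~\ref{lemDeltaGamma}.
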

\begin{proof}
Define $\delta = \sup\limits_{C \textrm{ compact}} \liminf\limits_{i\to \infty} Q^{m_i}(x, C^{\varepsilon/4})$.
Choose some $\gamma\geqslant 0$ and compact subset $C$ to satisfy
  \[ 0\leqslant \delta - \gamma \leqslant \eta, \ \ \ \
      \liminf\limits_{i\to \infty} Q^{m_i}(x, C^{\varepsilon/4}) \geqslant \gamma. \]
By Lemma \ref{lemDisj}, there exists some $N$ such that $C^{\varepsilon/4} \cap A_n^{\varepsilon/4} =
\emptyset$ for all $n\geqslant N$. Then, we have by the definition of $\delta$ that
  \[ \delta \geqslant \liminf\limits_{i\to \infty} Q^{m_i}(x, C^{\varepsilon/4} \cup A_n^{\varepsilon/4})
            \geqslant \gamma +  \liminf\limits_{i\to \infty} Q^{m_i}(x, A_n^{\varepsilon/4}),\]
which implies $\eta \geqslant \liminf\limits_{i\to \infty} Q^{m_i}(x, A_n^{\varepsilon/4})$ for all $n\geqslant N$.
\end{proof}

\begin{lemma} \label{lemLiminf}
Let $A_n \in \mathcal{B} \; (n\geqslant 1)$ be a sequence of mutually disjoint sets. Let $x\in X$ and $m_i\geqslant 1$. Then for any $\varepsilon > 0$, there is $N>0$ such that $\liminf\limits_{i\to
\infty} Q^{m_i}(x, A_N) \leqslant \varepsilon$. Hence, there exists $m_{i_k}$ with
$\limsup\limits_{k\to \infty} Q^{m_{i_k}}(x, A_N) \leqslant \varepsilon$.
\end{lemma}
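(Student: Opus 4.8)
The plan is to argue by a simple pigeonhole on the total mass of the probability measures $Q^{m_i}(x,\cdot)$. First I would fix $\varepsilon>0$ and pick an integer $K$ with $K\varepsilon>1$. Suppose, for contradiction, that $\liminf_{i\to\infty}Q^{m_i}(x,A_n)>\varepsilon$ for every $n=1,\dots,K$. Since the $A_n$ are mutually disjoint and each $Q^{m_i}(x,\cdot)=\frac1{m_i}\sum_{n=1}^{m_i}P^n(x,\cdot)$ is a probability measure, for every $i$ we have
  \[ \sum_{n=1}^{K}Q^{m_i}(x,A_n)=Q^{m_i}\Bigl(x,\bigcup_{n=1}^{K}A_n\Bigr)\leqslant 1. \]
Using the superadditivity of $\liminf$ over finite sums,
  \[ 1\;\geqslant\;\liminf_{i\to\infty}\sum_{n=1}^{K}Q^{m_i}(x,A_n)\;\geqslant\;\sum_{n=1}^{K}\liminf_{i\to\infty}Q^{m_i}(x,A_n)\;>\;K\varepsilon\;>\;1, \]
a contradiction. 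Hence there exists $N\in\{1,\dots,K\}$ with $\liminf_{i\to\infty}Q^{m_i}(x,A_N)\leqslant\varepsilon$, which is the first assertion.

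For the second assertion I would simply unwind the definition of the inferior limit: setting $\ell:=\liminf_{i\to\infty}Q^{m_i}(x,A_N)\leqslant\varepsilon$, there is a subsequence $(m_{i_k})_{k\geqslant 1}$ along which $Q^{m_{i_k}}(x,A_N)\to\ell$, and therefore $\limsup_{k\to\infty}Q^{m_{i_k}}(x,A_N)=\ell\leqslant\varepsilon$, as required.

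There is essentially no serious obstacle here; the only points to watch are that $Q^{m_i}(x,\cdot)$ is genuinely a probability measure (immediate from $P(x,X)\equiv 1$), so the finite-sum bound by $1$ holds uniformly in $i$, and that one must invoke the inequality $\liminf(a_i+b_i)\geqslant\liminf a_i+\liminf b_i$ rather than an equality. This lemma is then the combinatorial input to be combined with Lemmas~\ref{lemDisj} and~\ref{lemDeltaGamma} when ruling out escape of mass into disjoint neighborhoods in the proofs of Theorems~\ref{thmFeller2} and~\ref{thmFeller}.
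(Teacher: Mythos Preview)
Your proof is correct and follows essentially the same approach as the paper's. The paper's one-line argument writes the same inequality with the full infinite sum,
\[
  1 \;\geqslant\; \liminf_{i\to\infty} Q^{m_i}\Bigl(x,\bigcup_n A_n\Bigr) \;\geqslant\; \sum_{n\geqslant 1} \liminf_{i\to\infty} Q^{m_i}(x,A_n),
\]
and concludes that the terms of this convergent series must tend to zero; your finite-truncation-plus-contradiction version is just a slightly more explicit packaging of the same pigeonhole.
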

\begin{proof}
Due to $1 \geqslant \liminf\limits_{i\to \infty} Q^{m_i}(x, \bigcup\limits_{n} A_n) \geqslant \sum\limits_{n} \liminf\limits_{i\to \infty} Q^{m_i}(x, A_n)$.
\end{proof}

\subsection{Proof of Theorem \ref{thmFeller}}

The basic idea is partially from \cite{SzarekFeller}, but since we deal with the eventual continuity rather than equicontinuity, the proof becomes much more difficult here. For ease of reading, one can check firstly Step 1 and Step 4 provided (\ref{eq415}) below.

\begin{proof}
Referring to Billingsley \cite{Billingsley}, it is enough to show for any $\varepsilon >0$, there exists a compact set $E$ such that $Q^m(z,E^{2\varepsilon}) \geqslant 1-2\varepsilon$ for all $m\geqslant 1$. We divide the proof into four steps.

{\bf Step 1}. Given $\varepsilon > 0$. Denote $K_0 = \{z\}$ and $n_0 = 1$. By induction, we will find $K_j$ and $n_j$ as follows. For each $j\geqslant 1$, there exists some compact set $E_j$ satisfying
   \[ E_j\supset \bigcup\limits_{0\leqslant l<j} K_l, \ \ \textrm{ and } \ \
       \min\limits_{m \leqslant n_{j-1}} Q^m(z, E_j^{2\varepsilon}) \geqslant 1-2\varepsilon. \]
Let's introduce
  \beqn
     \inf\limits_{m > n_{j-1}} Q^m(z, E_j^{2\varepsilon}) =: 1 - 2\theta_j . \label{eqb10}
  \eeqn
If $\theta_j = 0$, we stop the procedure; else there exists $n_j > n_{j-1}$ such that $Q^{n_j}(z,E_j^{2\varepsilon}) < 1 - \theta_j$, together with a compact set $K_j$ satisfying
  \beqn
     E_j^{2\varepsilon} \cap K_j = \emptyset, \ \ \textrm{ and } \ \
       Q^{n_j}(z, K_j) \geqslant \theta_j. \label{eqb11}
  \eeqn

Either we can finish the proof just in finite steps, or collect a sequence of data $\{E_j, \theta_j, n_j, K_j\}$. Clearly, all $K_j^\varepsilon$ are disjoint mutually. Define
  \[ f_j(y) = \rho(y, (K_j^{\varepsilon/4})^c) \Big{/} \left( \rho(y,(K_j^{\varepsilon/4})^c) + \rho(y, K_j) \right),\]
which fulfills that $||f_j||_{\mathrm{Lip}} \simeq 4/\varepsilon$ and $\mathbf{1}_{K_j} \leqslant f_j \leqslant \mathbf{1}_{K_j^{\varepsilon/4}}$. \\

{\bf Step 2}. We want to select a subsequence from the above data such that it is so sparse that (\ref{eq413}) below holds. This step will be cut into four parts.

{\bf Part 2.1}. Denote $z_0 = z$, $s_0=r$ and $B_0 = B(z_0,s_0)$. By Condition $(\mathcal{L}_{loc})$, there exist some $x_0$ and $\{m_{i,0}\}_{i\geqslant 1}$ such that $\alpha_0 := \lim\limits_{i\to \infty}
Q^{m_{i,0}}(x_0,B_0) > 0$.

Using Lemma \ref{lemDeltaGamma} yields a big $j_0$ such that
    \beqn
       \varepsilon\alpha_0/16 &\geqslant&
        \liminf\limits_{i\to \infty} Q^{m_{i,0}}(x_0, K_{j_0}^{\varepsilon/4})
        \ =\ \liminf\limits_{i\to \infty} \int Q^{n_{j_0}}(y, K_{j_0}^{\varepsilon/4}) Q^{m_{i,0}}(x_0,dy) \nonumber\\
        &\geqslant& \liminf\limits_{i\to \infty} \int_{B_0} Q^{n_{j_0}}f_{j_0}(y) Q^{m_{i,0}}(x_0,dy). \label{eq390}
    \eeqn
Due to the Feller property, define an open subset in $B_0$ as
  \[ A_0 = \{y\in B_0: Q^{n_{j_0}}f_{j_0}(y) < \varepsilon/8\}. \]
It follows from (\ref{eq390}) that
  \[ \varepsilon \alpha_0/16 \geqslant \liminf\limits_{i\to \infty} \int_{B_0 - A_0} Q^{n_{j_0}}f_{j_0}(y) Q^{m_{i,0}}(x_0,dy), \]
which implies $\liminf\limits_{i\to \infty} Q^{m_{i,0}}(x_0, B_0 - A_0) \leqslant \alpha_0/2$, and thus
  \beqn
   \limsup\limits_{i\to \infty} Q^{m_{i,0}}(x_0,A_0) \geqslant \alpha_0/2. \label{eq392}
  \eeqn
So $A_0$ is nonempty, which contains a ball $B_1$ of radius less than $r/2$ such that
  \[  Q^{n_{j_0}}f_{j_0}(y) \leqslant \varepsilon/8, \ \ \forall y\in \overline{B_1}. \]

Inductively for each $B_k$ ($k\geqslant 0$), Condition $(\mathcal{L}_{loc})$ yields some $x_k$ and $\{m_{i,k}\}_{i\geqslant 1}$ with
  \[ \alpha_k := \lim\limits_{i \to \infty}Q^{m_{i,k}}(x_k, B_k) > 0.\]
For the same reason, there exist $j_k$ and a ball $B_{k+1} \subset B_k$ of radius less than $r/{2^{k+1}}$ satisfying
  \[  Q^{n_{j_k}}f_{j_k}(y) \leqslant \varepsilon/8, \ \ \forall y\in \overline{B_{k+1}}. \]
Hence, we can find a common $y_0 \in \bigcap \overline{B_k}$ such that
   \beqn
       Q^{n_{j_k}}f_{j_k}(y_0) \leqslant \varepsilon/8, \ \ \forall k\geqslant 0. \label{eq404}
   \eeqn

{\bf Part 2.2}. By Lemma \ref{lemLiminf}, there is a
subsequence $\{j_{k,1}\}_{k\geqslant 0} \subset \{j_k\}$ such that
  \[ \limsup\limits_{k\to \infty} Q^{n_{j_{k,1}}} f_{j_0}(y_0) \leqslant \varepsilon/8. \]
For the same reason, there is $\{j_{k,2}\}_{k\geqslant 0} \subset \{j_{k,1}\}$ such that
  \[ \limsup\limits_{k\to \infty} Q^{n_{j_{k,2}}} f_{j_{0,1}}(y_0) \leqslant \varepsilon/(8\cdot 2). \]
By induction, we have $\{j_{k,l+1}\}_{k\geqslant 0} \subset \{j_{k,l}\}$ satisfying
  \[ \limsup\limits_{k\to \infty} Q^{n_{j_{k,l+1}}} f_{j_{0,l}}(y_0) \leqslant \varepsilon/(8\cdot 2^l). \]

For simplicity of natation, still use $j_l$ instead of $j_{0,l}$. Recall (\ref{eq404}), we obtain
  \beqn
     Q^{n_{j_l}} f_{j_l}(y_0) \leqslant \varepsilon/8, \ \ \ \limsup\limits_{k\to \infty} Q^{n_{j_k}} f_{j_l}(y_0) \leqslant \varepsilon/(8\cdot 2^l), \ \ \forall l\geqslant 0.
    \label{eq408}
  \eeqn

{\bf Part 2.3}. Since $K_j^\varepsilon$ are disjoint mutually, there exists a big $u$ such that
   \[ Q^{n_{j_0}} \sum\limits_{k\geqslant u} f_{j_k}(y_0) \leqslant \varepsilon/8. \]
Combining with the first inequality in (\ref{eq408}) yields
   \[ Q^{n_{j_0}} (f_{j_0} + \sum\limits_{k\geqslant u} f_{j_k}) (y_0) \leqslant \varepsilon/4. \]

Let $\hat{j}_0 = j_0$ and $\hat{j}_1 = j_u$. For the same reason, there exists $v>u$ such that
   \[ Q^{n_{\hat{j}_1}} (f_{\hat{j}_1} + \sum\limits_{k\geqslant v} f_{j_k}) (y_0) \leqslant \varepsilon/4. \]
Let $\hat{j}_2 = j_v$. By induction, we have $\{\hat{j}_l\}_{l \geqslant 0} \subset \{j_l\}$ such that for all $l\geqslant 0$
  \[  Q^{n_{\hat{j}_l}} \sum\limits_{k\geqslant l} f_{\hat{j}_k}(y_0) \leqslant \varepsilon/4. \]

For simplicity of notation, still use $j_l$ instead of $\hat{j}_l$. Besides the second inequality in (\ref{eq408}), we obtain
  \beqn
    Q^{n_{j_l}} \sum\limits_{k\geqslant l} f_{j_k}(y_0) \leqslant \varepsilon/4, \ \ \limsup\limits_{k\to \infty} Q^{n_{j_k}} f_{j_l}(y_0) \leqslant \varepsilon/(8\cdot 2^l), \ \ \forall l\geqslant 0. \label{eq4111}
  \eeqn

{\bf Part 2.4}. Based on the second inequality in (\ref{eq4111}), there exists a big $u$ such that
   \[ Q^{n_{j_u}} f_{j_0}(y_0) \leqslant \varepsilon/4, \ \ \
       \limsup\limits_{k\to \infty} Q^{n_{j_k}} (f_{j_0} + f_{j_u}) (y_0) \leqslant (1+2^{-1}) \cdot \varepsilon/8. \]
For the same reason, there exists a big $v > u$ such that
   \[ Q^{n_{j_v}} (f_{j_0} + f_{j_u}) (y_0) \leqslant \varepsilon/4,\ \
      \limsup\limits_{k\to \infty} Q^{n_{j_k}} (f_{j_0} + f_{j_u} + f_{j_v}) (y_0) \leqslant (1+2^{-1}+2^{-2}) \cdot \varepsilon/8. \]

Let $\check{j}_0 = j_0, \check{j}_1 = j_u, \check{j}_2 = j_v$. By induction, we have $\{\check{j}_l\}_{l\geqslant 0} \subset \{j_l\}$ satisfying
   \[  Q^{n_{\check{j}_l}} \sum\limits_{0\leqslant k < l} f_{\check{j}_k}(y_0) \leqslant \varepsilon/4, \ \ \forall l\geqslant 0. \]
Combining with the first inequality in (\ref{eq4111}) yields
  \[ Q^{n_{\check{j}_l}} \sum\limits_{k\geqslant 0} f_{\check{j}_k}(y_0) \leqslant \varepsilon/2, \ \ \forall l\geqslant 0. \]

For simplicity of notation, still use $j_l$ instead of $\check{j}_l$, namely
  \beqn
    Q^{n_{j_l}} \sum\limits_{k\geqslant 0} f_{j_k}(y_0) \leqslant \varepsilon/2, \ \ \forall l\geqslant 0. \label{eq413}
  \eeqn

{\bf Step 3}. Write $j_{k,0} = j_k$. Let's repeat Step 2 by substituting $s_0$ to $s_1 = r/2$, then obtain some $\{j_{k,1}\}_{k\geqslant 0} \subset \{j_{k,0}\}$ and $y_1 \in \overline{B(z,s_1)}$ such that (similar to (\ref{eq413}))
  \[ Q^{n_{j_{l,1}}} \sum\limits_{k\geqslant 0} f_{j_{k,1}} (y_1) \leqslant \varepsilon/2. \]
By induction, we obtain the $p$-th subsequence $\{j_{k,p}\}_{k\geqslant 0} \subset \{j_{k,p-1}\}$ and some $y_p \in \overline{B(z, s_p)}$ for $s_p = r/2^p$ such that
  \[ Q^{n_{j_{l,p}}} \sum\limits_{k\geqslant 0} f_{j_{k,p}} (y_p) \leqslant \varepsilon/2, \ \ \forall l\geqslant 0. \]

Denote $\tilde{j}_p = j_{0,p}$, it follows that
  \beqn
    Q^{n_{\tilde{j}_l}} \sum\limits_{k\geqslant p} f_{\tilde{j}_k}(y_p) \leqslant \varepsilon/2, \ \ \forall l\geqslant p. \label{eq415}
  \eeqn
In fact, (\ref{eq415}) plays a crucial role for the proof. Note that $y_p \to z$.\\

{\bf Step 4}. Let $j^*_0 = \tilde{j}_0$. The eventual continuity yields some $r^*_0$ such that for all $y\in B(z,r^*_0)$
   \[ \limsup\limits_{m\to \infty} |Q^m f_{j^*_0} (z) - Q^m f_{j^*_0} (y)| \leqslant \varepsilon/8. \]
Due to (\ref{eq415}), choose $y_p\in B(z,r^*_0)$, denoted by $y^*_0$. And for this $p$, choose some $j^*_1 \in \{\tilde{j}_k\}$ with $j^*_1\geqslant \tilde{j}_p$.

By induction, if we have $j^*_0,j^*_1, \ldots, j^*_u$, there is $r^*_u$ such that for all $y\in B(z,r^*_u)$
   \beqn \limsup\limits_{m\to \infty} |Q^m \sum\limits_{l\leqslant u} f_{j^*_l} (z)
       - Q^m \sum\limits_{l\leqslant u} f_{j^*_l} (y)| \leqslant \varepsilon/8. \label{eq382}
   \eeqn
Choose $y_q\in B(z,r^*_u)$, denoted by $y^*_u$, and then $j^*_{u + 1} \in \{\tilde{j}_k\}$ with $j^*_{u + 1}\geqslant \tilde{j}_q$.

Consider the subsequence $\{j^*_k\}_{k\geqslant 0} \subset \{\tilde{j}_k\}$. Define $g = \sum f_{j^*_k} \in \mathrm{Lip_b}$. Again, the eventual continuity yields some $r^*$ such that for all $y\in B(z,r^*)$
   \beqn
     \limsup\limits_{m\to \infty} |Q^m g(z) - Q^m g(y)| \leqslant \varepsilon/8. \label{eq384}
   \eeqn
Fix some $y^*_u\in B(z,r^*)$ (for big $u$), denote $g_0 = \sum\limits_{l \leqslant u} f_{j^*_l}$ and $g_1 = g - g_0$. We have
   \beq
     && \limsup\limits_{k\to \infty} |Q^{n_{j^*_k}} g_1(z) - Q^{n_{j^*_k}} g_1 (y^*_u)| \\
     &\leqslant&  \limsup\limits_{k\to \infty} |Q^{n_{j^*_k}} g(z) - Q^{n_{j^*_k}} g (y^*_u)|
       + \limsup\limits_{k\to \infty} |Q^{n_{j^*_k}} g_0 (z) - Q^{n_{j^*_k}} g_0(y^*_u)|,
   \eeq
which is less than $\varepsilon/4$ by (\ref{eq382}-\ref{eq384}). Combining (\ref{eq415}), we have
  \[ \limsup\limits_{k\to \infty} Q^{n_{j^*_k}} g_1(z) \leqslant
     \limsup\limits_{k\to \infty} Q^{n_{j^*_k}} g_1 (y^*_u) + \varepsilon/4 \leqslant 3\varepsilon/4. \]

Recall (\ref{eqb10}-\ref{eqb11}) in Step 1, since for any $k>u$
  \[ \theta_{j^*_k} \leqslant Q^{n_{j^*_k}} (z, K_{j^*_k}) \leqslant Q^{n_{j^*_k}} f_{j^*_k} (z) \leqslant
      Q^{n_{j^*_k}} g_1(z), \]
it follows $\limsup\limits_{k\to \infty} \theta_{j^*_k} \leqslant 3\varepsilon/4$.
So there exist $\theta_{j^*_k} \leqslant \varepsilon$ and a compact $E_{j^*_k}$ such that
  \[ Q^m(z, E_{j^*_k}^{2\varepsilon}) \geqslant 1 - 2\varepsilon, \ \ \forall m\geqslant 1.\]
The proof of tightness is completed.
\end{proof}

\subsection{Proof of Theorem \ref{thmFeller2}}

Let's prove Theorem \ref{thmFeller2}.

\begin{proof}
Recall Part 2.1 in the proof of Theorem \ref{thmFeller}, the starting point is fixed as $z$ now. Using (\ref{eq392}), we can find some $n$ with $P^n(z,A_0)>0$, which implies there exists a ball $B_1 \subset A_0$ of radius less than $r/2$ such that $P^n(z,B_1)>0$.

By the Feller property, there exists $s>0$ such that for any $\xi \in B(z,s)$
  \[ P^n(\xi, B_1) \geqslant \frac12 P^n(z, B_1) >0.  \]
From Condition $(\mathcal{L}_S)$ at $z$, we derive
  \[ \limsup\limits_{m\to \infty} Q^m(z, B_1) \geqslant \frac12 P^n(z, B_1) \cdot \limsup\limits_{m\to \infty} Q^m(z, B(z,s))  > 0, \]
which can work for Part 2.2. Then we follow the remaining steps.
\end{proof}

\subsection{Proof of Proposition \ref{propSepa}}

Let's prove Proposition \ref{propSepa}.

\begin{proof}
Assume $z\in \mathrm{Supp}\mu \cap \mathrm{Supp}\nu$. Choose $f\in \mathrm{Lip_b}$ with $b = |\mu (f) - \nu (f)| > 0$. Then there exists $r>0$ satisfying $\lim\limits_{i\to \infty} |Q^{m_i}f(z) - Q^{m_i}f(y)| \leqslant b/4$ for all $y\in B(z,r)$. By the mean ergodic theorem, there exist $y_1$ and $y_2\in B(z,r)$ such that
  \[ \mu (f) = \lim\limits_{i\to \infty} Q^{m_i}f(y_1), \ \ \ \nu (f) = \lim\limits_{i\to \infty} Q^{m_i}f(y_2), \]
which implies $|\mu (f) - \nu (f)| \leqslant b/2$. This is a contradiction.
\end{proof}

\bigskip

\section{Asymptotic stability on support}
 \label{ASSupp}

In this section, we give the proof of Theorem \ref{thmAsyStaSup}.

\begin{lemma} \label{lemEveConSub}
Suppose $P^n$ is eventually continuous on $X_\mu$. Then for any $f\in \mathrm{C_b}$, there exist a sequence of compact set $K_i$ with $\mu(K_i) \uparrow 1$, and a subsequence $P^{n_k}f$ uniformly converging to some $g$ on each $K_i$. Moreover, extend $g$ to be $\limsup\limits_{k\to \infty} P^{n_k}f$ on whole $X$, then $g$ is continuous on $X_\mu$ and $P^ng = \lim\limits_{k\to\infty} P^{n+n_k}f$ $\mu$-a.e. on $X_\mu$.
\end{lemma}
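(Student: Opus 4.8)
The plan is to exploit that $\mu$ is a finite Borel measure on the Polish space $X$, so it is tight, together with the eventual continuity of $P^n$ on $X_\mu$ and a diagonal extraction. First I would fix $f \in \mathrm{C_b}$. For each $z \in X_\mu$, eventual continuity gives a radius $r_z$ with $\sup_{y \in B(z,r_z)} \limsup_{n\to\infty} |P^n f(y) - P^n f(z)| \leqslant \varepsilon$ (for a prescribed $\varepsilon$); but to get a genuine Cauchy/uniform-convergence statement I would instead work with the oscillation function $\mathrm{osc}(y) := \limsup_{n} P^n f(y) - \liminf_{n} P^n f(y)$ and observe that eventual continuity forces $\mathrm{osc}$ to be small near each point of $X_\mu$, hence $\limsup_n P^n f$ and $\liminf_n P^n f$ are continuous on $X_\mu$ and (by a standard argument, e.g. Egorov-type reasoning relative to $\mu$) agree up to small error on large-$\mu$-measure compact sets. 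Concretely, by tightness pick compacts $C_i \subset X_\mu$ with $\mu(C_i) \uparrow 1$; on $C_i$ the functions $\limsup_n P^n f$ and $\liminf_n P^n f$ are continuous, and by a covering argument using the $r_z$'s their difference is $\leqslant \varepsilon_i$ on a slightly shrunk compact subset, with $\varepsilon_i \to 0$ after relabelling. This pins down a candidate limit $g := \limsup_n P^n f$ which is continuous on $X_\mu$.

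Next I would extract the subsequence. On each fixed compact $K_i$ (the shrunk version of $C_i$), the family $\{P^n f\}_{n\geqslant 1}$ is uniformly bounded by $\|f\|_\infty$, and the pointwise limit superior and inferior are within $\varepsilon_i$; a direct subsequential extraction (pick $n_k$ so that $P^{n_k}f$ is within $2^{-k}$ of $\limsup_n P^n f$ at a chosen $2^{-k}$-net of $K_i$, then pass to a diagonal over $i$ and over the nets, using equicontinuity-on-$K_i$ of the limit $g$ restricted there — which follows since $g$ is continuous on the compact $K_i$) yields a single subsequence $P^{n_k}f$ converging uniformly on every $K_i$ to $g|_{K_i}$. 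This is the standard "pointwise convergence on a dense set plus equicontinuity of the limit gives uniform convergence on compacts" mechanism, adapted to the fact that here we only have $\limsup$/$\liminf$ control rather than honest convergence — which is exactly why the $\varepsilon_i\to 0$ bookkeeping is needed.

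For the last assertion, $P^n g = \lim_{k\to\infty} P^{n+n_k} f$ $\mu$-a.e.\ on $X_\mu$: I would use invariance of $\mu$ (so $\mu(K_i) \uparrow 1$ is preserved under $P^n$ in an averaged sense) together with dominated convergence against the kernel $P^n(x,dy)$. Writing $P^{n+n_k}f(x) = \int P^{n_k}f(y)\, P^n(x,dy)$, one splits the integral over $K_i$ and its complement; on $K_i$, $P^{n_k}f \to g$ uniformly, and the complement has small mass for $\mu$-a.e.\ $x$ because $\int P^n(x, X_\mu \setminus K_i)\, \mu(dx) = \mu(X_\mu \setminus K_i) \to 0$ by invariance, so along a subsequence (or by Borel–Cantelli) $P^n(x, X_\mu\setminus K_i) \to 0$ for $\mu$-a.e.\ $x$. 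Combining gives $P^{n+n_k}f(x) \to \int g\, P^n(x,dy) = P^n g(x)$ for $\mu$-a.e.\ $x \in X_\mu$.

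The main obstacle I anticipate is the first paragraph: converting the "one point at a time" eventual-continuity statement (\ref{eqPEvCo}) into a uniform-on-compacts statement for a \emph{single} subsequence. Eventual continuity only controls $\limsup_n$, so there is no a priori convergent subsequence handed to us; the care lies in showing $\limsup_n P^n f$ is genuinely continuous on $X_\mu$ (not merely upper semicontinuous), arranging the compact exhaustion $K_i$ so that the oscillation of $P^n f$ is uniformly small on $K_i$ for large $n$, and only then running the diagonal extraction. The measure-theoretic last step is comparatively routine given invariance of $\mu$.
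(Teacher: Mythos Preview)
There is a genuine gap in your first paragraph: eventual continuity does \emph{not} force the oscillation $\mathrm{osc}(y)=\limsup_n P^nf(y)-\liminf_n P^nf(y)$ to be small. It controls the asymptotic \emph{spatial} variation $\limsup_n|P^nf(y)-P^nf(z)|$ for $y$ near $z$, but says nothing about the temporal oscillation of $n\mapsto P^nf(z)$ itself. A concrete counterexample: take $X=\{0,1\}$ with the swap kernel $P(0,\cdot)=\delta_1$, $P(1,\cdot)=\delta_0$, and $\mu$ uniform (which is ergodic). Eventual continuity holds trivially on this discrete space, yet for $f=\mathbf{1}_{\{1\}}$ one has $\mathrm{osc}\equiv 1$ everywhere. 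In particular your candidate $g:=\limsup_n P^nf$ is the wrong target: the lemma only asserts convergence along a \emph{subsequence} $n_k$, and its statement defines $g=\limsup_k P^{n_k}f$, which genuinely depends on which subsequence is extracted. Your second-paragraph extraction (``pick $n_k$ so that $P^{n_k}f$ is within $2^{-k}$ of $\limsup_n P^nf$ at a net'') is therefore impossible in general---in the swap example no single $n$ makes $P^nf$ close to $\limsup_n P^nf\equiv 1$ at both points simultaneously.

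The paper proceeds differently. From eventual continuity at each $x$ in a compact $F\subset X_\mu$ it builds, for each level $l$, finitely many closed cells $F_{j,l}$ on which $|P^nf(y)-P^nf(y')|\le \varepsilon/2^l$ for all large $n$ (an \emph{asymptotic equicontinuity} on cells, not an oscillation bound), with $\mu(F\setminus\bigcup_jF_{j,l})$ small. Intersecting over $l$ gives a compact $F_*$ of nearly full $\mu$-measure carrying this asymptotic equicontinuity; one then extracts, by Bolzano--Weierstrass plus a diagonal, a subsequence $n_k$ along which $P^{n_k}f$ converges at the countably many cell-centers $x_{j,l}$, and an Ascoli--Arzel\`a argument gives uniform convergence on $F_*$. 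A further diagonal over an exhausting sequence of compacts produces the $K_i$. For the final identity $P^ng=\lim_k P^{n+n_k}f$ $\mu$-a.e., the paper uses a short Fatou-and-invariance sandwich; your dominated-convergence argument for that step can be made to work once one notes $P^n(x,\bigcup_iK_i)=1$ for $\mu$-a.e.\ $x$, but the Fatou route is cleaner.
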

\begin{proof}
Let $F \subset X_\mu$ be a compact set. For any $\varepsilon > 0$ and $x\in F$, due to the eventual continuity, there exists some $O_x$ such that for all $y\in O_x$
   \[ \limsup\limits_{n\to\infty} \ |P^nf(y) - P^nf(x)| \leqslant \varepsilon/8. \]
Thus there exists a sequence of increasing subsets $E_{x,m}$ (containing $x$) tending to $O_x$ such that for all $n\geqslant m$ and $y,y'\in E_{x,m}$
   \[ |P^nf(y) - P^nf(y')| \leqslant \varepsilon/2.  \]
By the Feller property, $E_{x,m}$ can be chosen as a closed subset. Since $F$ is compact, we can find a finite open covering $\{O_{x_1}, \ldots, O_{x_p}\}$ of $F$, and then select a big $m$ such that $\mu(F - \bigcup_{j=1}^p E_{x_j,m}) \leqslant \varepsilon \mu(F)/2$.

For convenience of notation, denote
  \[ p_0=1, \ \ F_0=F,\ \ p_1=p, \ \ F_{j,1} = F \cap E_{x_j,m}, \ \ F_1 = \bigcup\nolimits_{j=1}^{p_1} F_{j,1},
      \ \ x_{j,1} = x_j. \]
Here $F_{j,1}$ is still compact. By induction, we have a net-like structure $\{F_{j,l}, p_l\}$ satisfying $1\leqslant j\leqslant p_l$ and
\begin{enumerate}
 \item $|P^nf(y) - P^nf(y')| \leqslant \varepsilon/2^l, \ \ \forall \textrm{ big }n ,\ \forall\; y,y'\in F_{j,l}$;
 \item $\forall j, \ \exists i, \textrm{ s.t. }
  F_{j,l} \subset F_{i,l-1}$;
 \item $\mu(F_{l-1} - F_l) \leqslant \varepsilon \mu(F)/2^l$, where $F_l = \bigcup_{j=1}^{p_l} F_{j,l}$.
\end{enumerate}
Define $F_* = \bigcap F_l$, we have $\mu(F_*) \geqslant (1-\varepsilon)\mu(F)$. Then choose a subsequence $n_k$ such that $P^{n_k}f$ converges at every $x_{j,l}$. Hence, using the Ascoli-Arzela's arguments yields the uniform convergence on $F_*$ for the family $\{P^{n_k}f\}$.

Now, choose arbitrarily a sequence of compact subsets $\tilde{K}_i$ with $\mu(\tilde{K}_i) \uparrow 1$. By the above result, there exists a subsequence $P^{n_{k,1}}f$ uniformly converging on a compact subset $K_1\subset \tilde{K}_1$ with $\mu(\tilde{K}_1 - K_1) \leqslant 2^{-1}$. Inductively for $i\geqslant 2$, we can always find the $i$-th subsequence $\{P^{n_{k,i}}f\} \subset \{P^{n_{k,i-1}}f\}$ uniformly converging on a compact subset $K_i\subset \tilde{K}_i$ with $\mu(\tilde{K}_i - K_i) \leqslant 2^{-i}$. Hence, we obtain that $P^{n_{k,k}}f$ uniformly converges on each $K_i$ with $\mu(K_i) \uparrow 1$.

Denote by $g$ the limit of $P^{n_{k,k}}f$ on $\bigcup K_i$, and extend it to be $\limsup\limits_{k\to \infty} P^{n_{k,k}}f$ on $X$. Thus $g$ is continuous on $X_\mu$ due to the eventual continuity.

For simplicity, rewrite $n_k$ instead of $n_{k,k}$. The Fatou's lemma gives
   \beq
      && \mu(g) = \int \liminf\limits_{k\to\infty} P^{n_k}f d\mu = \int P^n(\liminf\limits_{k\to\infty} P^{n_k}f) d\mu \\
      && \ \ \ \ \ \ \leqslant \int \liminf\limits_{k\to\infty} P^{n+n_k}f d\mu \leqslant \int \limsup\limits_{k\to\infty} P^{n+n_k}f d\mu \leqslant \int P^ng d\mu = \int g d\mu,
   \eeq
which implies $P^ng = \limsup\limits_{k\to\infty} P^{n+n_k}f = \liminf\limits_{k\to\infty} P^{n+n_k}f$ $\mu$-a.e. on $X_\mu$.
\end{proof}

\begin{lemma} \label{lemEveConAll}
Suppose $P^n$ is eventually continuous on $X_\mu$. Then for any $x\in X_\mu$ and any open set $B$ with $\mu(B)>0$, there exists $k$ with $P^k(x,B)>0$.
\end{lemma}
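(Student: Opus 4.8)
The plan is an \emph{ergodicity-plus-eventual-continuity transfer}: first one shows that $\limsup_{m\to\infty}Q^m f>0$ holds $\mu$-almost everywhere for a suitable continuous minorant $f$ of $\mathbf 1_B$, and then one uses the eventual continuity of $Q^m$ at $x$ to promote this positivity to the single, arbitrary point $x\in X_\mu$. To build $f$: since $\mu(B)>0$ and $\mu$ is concentrated on $X_\mu=\mathrm{Supp}\mu$, the open set $B$ meets $X_\mu$, so we may pick $y_0\in B\cap X_\mu$ and $r_0>0$ with $B(y_0,r_0)\subset B$, and set $f(y)=\min\{1,\max\{0,\,2-2\rho(y,y_0)/r_0\}\}\in\mathrm{Lip_b}$, so that $\mathbf 1_{B(y_0,r_0/2)}\le f\le\mathbf 1_{B(y_0,r_0)}\le\mathbf 1_B$. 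As $y_0\in\mathrm{Supp}\mu$, the constant $c:=\mu(f)\ge\mu(B(y_0,r_0/2))$ is strictly positive.

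Since $P$ is a Markov operator and $\mu$ an ergodic invariant measure, the mean ergodic theorem gives $Q^m f\to c$ in $L^2(\mu)$, hence $Q^{m_i}f\to c$ $\mu$-a.e.\ along a subsequence; let $G$ be the resulting set of full $\mu$-measure, so that $\limsup_{m\to\infty}Q^m f(y)\ge c$ for every $y\in G$. On the other hand, eventual continuity of $P^n$ at the point $x$ (which holds since $x\in X_\mu$) implies eventual continuity of $Q^m$ at $x$, by the implication (\ref{eqPEvCo})$\Rightarrow$(\ref{eqQEvCo}); thus with $\varepsilon=c/2$ there is a neighbourhood $O_x$ with $\sup_{y\in O_x}\limsup_m|Q^m f(y)-Q^m f(x)|\le c/2$. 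Because $x\in\mathrm{Supp}\mu$ we have $\mu(O_x)>0$, hence $O_x\cap G\neq\emptyset$; fixing $y^*\in O_x\cap G$ and using $\limsup_m(a_m-b_m)\ge\limsup_m a_m-\limsup_m b_m$ gives
\[
\limsup_{m\to\infty}Q^m f(x)\ \ge\ \limsup_{m\to\infty}Q^m f(y^*)\ -\ \limsup_{m\to\infty}\big|Q^m f(x)-Q^m f(y^*)\big|\ \ge\ c-\tfrac{c}{2}\ >\ 0.
\]
Therefore $Q^m f(x)>0$ for some $m$, whence $P^n f(x)>0$ for some $n\le m$, and since $f\le\mathbf 1_B$ we conclude $P^n(x,B)\ge P^n f(x)>0$; take $k=n$.

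The step requiring the most care is precisely the transfer from ``$\limsup_m Q^m f>0$ holds $\mu$-a.e.'' to ``it holds at the prescribed $x$'': eventual continuity is indispensable here (without it one only controls $\mu$-a.e.\ starting points), and one must ensure that the full-measure good set $G$ genuinely meets the neighbourhood $O_x$ — which is where $x\in\mathrm{Supp}\mu$, hence $\mu(O_x)>0$, enters. Everything else is routine: the Urysohn-type construction of $f$, the elementary $\limsup$ inequality, and the final minorisation $P^n(x,B)\ge P^n f(x)$. If one prefers not to invoke the mean ergodic theorem, one can instead observe that invariance gives $\mu(Q^m f)\equiv c$, deduce via reverse Fatou that $\{\limsup_m Q^m f>0\}$ has positive $\mu$-measure, and then upgrade this to full measure by an almost-invariance argument together with the ergodicity of $\mu$; the route above is the shortest.
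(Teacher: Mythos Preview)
Your proof is correct and follows essentially the same route as the paper: pick a continuous minorant $f\le\mathbf 1_B$ with $\mu(f)>0$, use ergodicity to get $Q^m f\to\mu(f)$ $\mu$-a.e., and then use eventual continuity at $x\in X_\mu=\mathrm{Supp}\,\mu$ to transfer positivity to $x$. The only cosmetic difference is that the paper phrases the last step as ``$\limsup_m Q^m f$ is continuous on $X_\mu$, hence equals $\mu(f)$ there'' (invoking the remark after (\ref{eqQEvCo})), whereas you unfold this into an explicit $\varepsilon$--$\delta$ argument with a comparison point $y^*\in O_x\cap G$.
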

\begin{proof}
Choose $f\in \mathrm{C_b}$ with $0\leqslant f \leqslant \mathbf{1}_B$ and $\mu(f)>0$, then the ergodicity means $\lim\limits_{n\to \infty} Q^nf(y) = \mu(f)$ for $\mu$-a.e. $y\in X_\mu$. Since $P^n$ is eventually continuous, $\lim\limits_{n\to \infty} Q^nf$ is continuous on $X_\mu$. Hence, there is $k$ with $P^k(x,B)\geqslant P^kf(x) >0$.
\end{proof}

Now let's prove Theorem \ref{thmAsyStaSup}.

\begin{proof}
If $P^n$ is asymptotically stable on $X_\mu$, it is easy to prove Statement (2).

On the contrary, we follow the idea in \cite[Proposition 18.4.3]{MT} with some adjustments. For any $f\in \mathrm{C_b}$ with $|f|\leqslant 1$ and $\mu (f) =0$, according to the monotonicity
  \[ \int |P^nf| d\mu = \int P^m(|P^nf|) d\mu \geqslant \int |P^{m+n}f| d\mu,\]
define $v= \lim\limits_{n\to\infty} \int |P^n f| d\mu$. By Lemma \ref{lemEveConSub}, there is a subsequence $P^{n_k}f$ converging to $g$ on a $\mu$-full set $Y$ and $g$ is continuous on $X_\mu$. Thus we have for all $n\geqslant 1$
  \beqn
     \ \ \ \ \int |g| d\mu = \lim\limits_{k\to\infty} \int |P^{n_k} f| d\mu = v
      = \lim\limits_{k\to\infty} \int |P^{n+n_k} f| d\mu = \int |P^ng| d\mu \label{eqv|g|}
  \eeqn
by the dominated convergence theorem.

Claim that $g$ preserves signs on $Y$. Otherwise, there exist two neighborhoods $O_+$ and $O_-$ with positive $\mu$-mass such that
  \beq
     \{x\in Y: g(x)>0\} \ \ \subset &O_+& \subset\ \ \{x\in X: g(x)>0\},\\
     \{x\in Y: g(x)<0\} \ \ \subset &O_-& \subset\ \ \{x\in X: g(x)<0\}.
  \eeq
Let $z\in X_\mu$ be an aperiodic point. We can find $k_\pm$ by Lemma \ref{lemEveConAll} such that
  \beqn
    P^{k_+}(z,O_+)>0, \ \ \ P^{k_-}(z,O_-)>0. \label{eqO+-}
  \eeqn
The Feller property yields a neighborhood $U$ of $z$ satisfying (\ref{eqO+-}) for all $x\in U$. Then choose a big $l$, writing $l_+ = l - k_+$ and $l_- = l - k_-$, such that $P^{l_{\pm}}(z,U) > 0$ due to the aperiodicity and
  \[ P^l(z,O_{\pm}) \geqslant \int_U P^{k_{\pm}}(y,O_{\pm}) P^{l_{\pm}}(z,dy) >0. \]
Again, the Feller property yields another neighborhood $V$ of $z$ with $P^l(x,O_{\pm})>0$ for all $x\in V$. It follows $|P^lg|< P^l|g|$ on $V$, then $\int |P^lg| d\mu < \int P^l|g| d\mu = \int |g| d\mu$, which contradicts (\ref{eqv|g|}). Hence, the above claim is true.

Consequently, we obtain
  \[ v= \int |g| d\mu = \mu (g) = \mu (f) =0,  \]
which implies by the Fatou's lemma again
   \[ 1 = \lim\limits_{n\to \infty} \int 1-|P^nf| d\mu \geqslant \int 1 - \limsup\limits_{nto \infty}|P^nf| d\mu. \]
Hence, $\lim\limits_{n\to \infty} P^nf \equiv 0$ on $X_\mu$ due to the eventual continuity.
\end{proof}

\bigskip

\section{Global asymptotic stability}
 \label{ASGlo}

In this section, we will prove Theorem \ref{thmAsyStaGloNew} and \ref{thmAsyStaGlo}.

\begin{lemma} \label{lemFellInv}
$X_\mu$ is an invariant set, i.e. $P^n(x,X_\mu) = 1$ for all $x\in X_\mu$ and $n\geqslant 1$.
\end{lemma}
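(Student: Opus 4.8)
The plan is to show that $\mu(X \setminus X_\mu) = 0$ forces $P^n(x, X_\mu) = 1$ for $\mu$-almost every $x$, and then to upgrade this to \emph{every} $x \in X_\mu$ using the Feller property together with the fact that $X_\mu$ is the support of $\mu$. First I would note that since $\mu$ is invariant, for any $n \geq 1$ we have
\[
0 = \mu(X \setminus X_\mu) = \int_X P^n(x, X \setminus X_\mu)\, \mu(dx),
\]
and since the integrand is nonnegative this gives $P^n(x, X \setminus X_\mu) = 0$, i.e. $P^n(x, X_\mu) = 1$, for $\mu$-a.e. $x$; in particular for $\mu$-a.e. $x \in X_\mu$. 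It remains to remove the null set.

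Suppose for contradiction that there is some $x_0 \in X_\mu$ and some $n$ with $P^n(x_0, X_\mu) < 1$, say $P^n(x_0, X_\mu) \leq 1 - \delta$ for some $\delta > 0$. Since $X_\mu$ is closed, $X \setminus X_\mu$ is open, so by the Portmanteau theorem the map $x \mapsto P^n(x, X \setminus X_\mu)$ is lower semicontinuous — this is exactly where the Feller property enters, because weak continuity of $x \mapsto P^n(x, \cdot)$ gives $\liminf_{y \to x} P^n(y, U) \geq P^n(x, U)$ for open $U$. Hence $\{x : P^n(x, X \setminus X_\mu) > \delta/2\}$ is an open neighborhood of $x_0$. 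Because $x_0 \in X_\mu = \operatorname{Supp}\mu$, this neighborhood has positive $\mu$-measure, so it contains a positive-$\mu$-measure set of points $x$ with $P^n(x, X_\mu) \leq 1 - \delta/2 < 1$, contradicting the $\mu$-a.e. statement established in the first step.

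The argument is short and the only genuine ingredient is the lower semicontinuity of $x \mapsto P^n(x, U)$ for open $U$, which follows from the Feller property (one may also phrase it via a monotone sequence of bounded continuous functions increasing to $\mathbf{1}_U$, as is done elsewhere in the paper). The main point to be careful about is that one needs $X_\mu$ closed (true, as a support) so that its complement is open and Portmanteau applies; no compactness or separability beyond the standing Polish hypothesis is required. I expect no real obstacle here — this lemma is essentially a bookkeeping step feeding the global asymptotic stability results of this section.
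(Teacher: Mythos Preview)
Your proof is correct and is essentially the same argument as the paper's: both obtain a contradiction by using the Feller property to show that if $P^n(x_0,X\setminus X_\mu)>0$ at some $x_0\in X_\mu$, then this positivity persists on an open neighborhood of $x_0$, which has positive $\mu$-measure since $x_0\in\mathrm{Supp}\,\mu$, contradicting $\mu(X\setminus X_\mu)=0$. The paper's version is slightly more compressed---it takes a single $f\in\mathrm{C_b}$ with $0\le f\le\mathbf{1}_{X\setminus X_\mu}$ and $P^nf(x_0)>0$, then uses continuity of $P^nf$ directly---whereas you phrase the same step via Portmanteau lower semicontinuity; these are equivalent formulations of the same idea.
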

\begin{proof}
Assume $P^n(x,X_\mu) < 1$ for some $x\in X_\mu$. Take $f\in \mathrm{C_b}$ with $0\leqslant f \leqslant \mathbf{1}_{X-X_\mu}$ and $P^nf(x) > 0$. The Feller property yields a neighborhood $O_x$ with $P^nf(y) > 0$ for all $y\in O_x$. It follows $0 = \mu (f) \geqslant \int_{O_x} P^nf(y) d\mu(y) > 0$.
\end{proof}

\subsection{Proof of Theorem \ref{thmAsyStaGloNew}}
The next lemma says that the process will stay in a neighborhood of the ergodic support eventually.

\begin{lemma} \label{lemAsyExitProb}
Under the same conditions and (2) as in Theorem \ref{thmAsyStaGloNew}, for all $x\in X$ and $\varepsilon>0$
  \[ \lim\limits_{n\to \infty} P^n(x,X- X_\mu^\varepsilon) = 0. \]
\end{lemma}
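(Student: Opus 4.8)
The goal is to show that for every $x\in X$ and $\varepsilon>0$ the exit probability $P^n(x,X-X_\mu^\varepsilon)$ tends to $0$ as $n\to\infty$. The natural strategy is to bound the limsup of $P^n(x,X-X_\mu^\varepsilon)$ by testing against a well-chosen bounded continuous function and exploiting eventual continuity on $X_\mu$ together with assumption (2). First I would fix $\varepsilon>0$ and pick $f\in\mathrm{C_b}$ with $\mathbf{1}_{X-X_\mu^\varepsilon}\leqslant f\leqslant \mathbf{1}_{X-X_\mu^{\varepsilon/2}}$, so in particular $f$ vanishes on $X_\mu$ and hence $\mu(f)=0$ and $P^nf\equiv 0$ on $X_\mu$ for all $n$ by Lemma \ref{lemFellInv}. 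It then suffices to show $\limsup_{n\to\infty}P^nf(x)=0$.

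\textbf{Key steps.} The idea is to chase the mass: if the process spends a non-negligible asymptotic fraction of its time outside $X_\mu^{\varepsilon/2}$, then by assumption (2) it still returns, with limsup-probability at least $\eta$, into a small neighborhood $O_z$ of the distinguished point $z\in X_\mu$; once it is near $z$, the hypothesis that $P^n(\cdot,\cdot)\xrightarrow{\mathrm w}\mu$ on $X_\mu$ combined with eventual continuity on $X_\mu$ forces it to equidistribute and in particular to leave the region where $f$ is large. Concretely I would argue by contradiction: suppose $\delta:=\limsup_{n\to\infty}P^nf(x)>0$. Using the Chapman-Kolmogorov decomposition $P^{n+k}f(x)=\int P^kf(y)\,P^n(x,dy)$ and the fact that $P^kf$ is continuous and vanishes on $X_\mu$, one shows that along a subsequence the mass $P^n(x,\cdot)$ charging $\{P^kf>\text{small}\}$ stays bounded below; but $\{P^kf>\text{small}\}$ is for suitable large $k$ disjoint from a fixed neighborhood $O_z$ of $z$ (since $P^kf(z)=0$ and $P^kf$ is continuous at $z$), hence $\limsup_n P^n(x,X\setminus O_z)$ is bounded below by a positive quantity strictly less than $1-\eta$ — wait, this alone is not a contradiction, so the sharper move is needed: apply eventual continuity at $z$ to the function $g:=\limsup_k P^{m_k}f$ (a continuous function on $X_\mu$ vanishing on $X_\mu$, by Lemma \ref{lemEveConSub}), deduce $g\equiv 0$ near $z$, and then transport this back via assumption (2): since with limsup-probability $\geqslant\eta$ the process enters $O_z$, and from $O_z$ the iterated kernel sends $f$-mass to $0$ (because $P^jf\to 0$ near $z$), the contribution $\eta\cdot 0$ plus the complementary $(1-\eta)\cdot 1$ cannot account for $\delta$ unless $\delta\leqslant 1-\eta$; iterating this reduction (each pass through $O_z$ strips off a factor) drives $\delta$ to $0$.

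\textbf{Main obstacle.} The delicate point is making the iteration rigorous: the bound $\limsup_n P^n(x,O_z)\geqslant\eta$ is only a limsup statement, so one cannot simply say "after the process visits $O_z$ it forgets $f$" — one must organize the estimate so that the limsup over the outer time $n$ and the limit over the inner time (the number of steps after entering $O_z$) interact correctly. I expect the cleanest route is to combine Lemma \ref{lemEveConSub} (to extract a subsequence $P^{n_k}f\to g$ with $g$ continuous on $X_\mu$, $P^n g=\lim_k P^{n+n_k}f$ $\mu$-a.e., and $g=0$ on $X_\mu$ since $f=0$ there) with the continuity of $g$ at $z$ from eventual continuity, yielding $g\equiv 0$ on a neighborhood of $z$; then a single application of assumption (2) plus the Feller property (to upgrade $P^k(\cdot,O_z)\geqslant\eta'$ to hold on a neighborhood, and to pass the estimate through $P^n(x,dy)$) gives $\limsup_n P^n f(x)\leqslant 1-\eta''$ for a fixed $\eta''>0$ independent of $x$; finally, since $\varepsilon$ was arbitrary and the same $1-\eta''$ bound holds uniformly, a standard self-improvement — replacing $f$ by $P^N f$ which is again of the same type but now small on a larger set, or directly iterating the estimate $N$ times — forces $\limsup_n P^n f(x)=0$. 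The bookkeeping of these nested limits, rather than any single inequality, is where the real work lies.
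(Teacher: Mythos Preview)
Your ingredients are right --- the test function $f$ vanishing on a neighborhood of $X_\mu$, the fact that $P^nf\equiv 0$ on $X_\mu$ by invariance, eventual continuity at $z$, and assumption~(2) --- but the organization is much more circuitous than necessary, and the final ``self-improvement'' is not pinned down. The paper's proof makes one move you keep circling around without making explicit: set
\[
  \gamma \;:=\; \sup_{x\in X}\,\limsup_{n\to\infty} P^nf(x)
\]
and derive a contradiction from $\gamma>0$ in a single step. Since $\limsup_n P^nf$ vanishes on $X_\mu$ and is continuous at each point of $X_\mu$ by eventual continuity, it is $\leqslant\gamma/4$ on some neighborhood $U$ of $z$; assumption~(2) then supplies $\eta>0$ with $\limsup_m P^m(y,U)\geqslant\eta$ for all $y$. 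Choosing $x_0$ with $\limsup_n P^nf(x_0)\geqslant\gamma(1-\tfrac12\eta)$ and applying reverse Fatou to $P^{m+n}f(x_0)=\int P^nf\,dP^m(x_0,\cdot)$, split over $U$ and $X\setminus U$, yields $\gamma(1-\tfrac12\eta)\leqslant\gamma(1-\tfrac34\eta)$, a contradiction. No iteration, no subsequence extraction.

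Two specific points. First, the detour through Lemma~\ref{lemEveConSub} is unnecessary: you do not need a subsequential limit $g$ or any $\mu$-a.e.\ identity, only that $\limsup_n P^nf$ is small near $z$, which is immediate from eventual continuity at $z$ and $P^nf(z)=0$. Second, your ``self-improvement'' step is where the proposal is weakest. Replacing $f$ by $P^Nf$ does not help: $P^Nf$ need not be dominated by any $\mathbf{1}_{X-X_\mu^{\varepsilon'}}$, so it is not ``of the same type''. The alternative, ``iterating $N$ times'', \emph{can} be made rigorous, but only because the bound $\limsup_n P^nf(x)\leqslant 1-\eta''$ you obtain is uniform in $x$ --- that is, it is a bound on $\gamma$. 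Feeding this back into the Fatou estimate gives $\gamma\leqslant\gamma(1-\eta)+\varepsilon_0\eta$, hence $\gamma\leqslant\varepsilon_0$, and $\varepsilon_0$ was arbitrary. So your iteration is exactly the supremum argument, done piecewise rather than in one stroke; once you see this, introducing $\gamma$ at the outset is clearly the cleaner route.
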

\begin{proof}
Take $\varepsilon'\in (0, \varepsilon)$ and $f\in \mathrm{C_b}$ such that $0\leqslant f \leqslant 1$, $f=1$ on $X- X_\mu^\varepsilon$ and $f=0$ on $X_\mu^{\varepsilon'}$. Set $\gamma = \sup\limits_{x\in X} \limsup\limits_{n\to \infty} P^nf(x)$. Assume $\gamma>0$.

Lemma \ref{lemFellInv} yields $\lim\limits_{n\to \infty} P^nf(x) = 0$ for all $x\in X_\mu$. By the eventual continuity on $X_\mu$, there exists $\delta>0$ such that $\limsup\limits_{n\to \infty} P^nf(x) \leqslant \frac14 \gamma$ for all $x\in X_\mu^{\delta}$. Then for $X_\mu^{\delta}$, Statement (2) gives some $\eta>0$ such that $\limsup\limits_{n\to \infty} P^n(y, X_\mu^\delta) \geqslant \eta$ for all $y\in X$.

Select $x_0\in X$ with $\limsup\limits_{n\to \infty} P^nf(x_0) \geqslant \gamma(1-\frac12\eta)$. The Fatou's lemma gives
   \beq
      &&\limsup\limits_{n\to \infty} P^nf(x_0)
         \ \leqslant\ \int \limsup\limits_{n\to \infty} P^nf(y) dP^m(x_0,y)\\
         &\leqslant& \frac{1}{4}\gamma \cdot P^m(x_0,X_\mu^\delta) + \gamma\cdot P^m(x_0,X - X_\mu^\delta)
         \ =\ \gamma(1-\frac34 P^m(x_0, X_\mu^\delta)).
   \eeq
Then taking the inferior limit in $m$, we have
   \[  \gamma(1-\frac12\eta) \leqslant \gamma(1-\frac34 \limsup\limits_{m\to \infty} P^m(x_0, X_\mu^\delta))
        \leqslant \gamma(1-\frac34 \eta), \]
which contradicts oneself. Hence, $\gamma=0$.
\end{proof}

Now, let's prove Theorem \ref{thmAsyStaGloNew}.

\begin{proof}
If $P^n$ is asymptotically stable on $X$, it is easy to prove Statement (2).

On the contrary, assume Statement (2) is true. Let $f\in \mathrm{C_b}$ with $|f|\leqslant 1$ and $\mu(f) =0$. Then $\lim\limits_{n\to \infty} P^nf(y) = \mu(f) = 0$ for all $y\in X_\mu$. Fix arbitrary $x\in X$, due to the eventual continuity of $P^nf$ on $X_\mu$, for any $\varepsilon > 0$, there exist a compact subset $K\subset X_\mu$ and $\delta>0$ such that
  \[ P(x, X_\mu^\delta - K^\delta) \leqslant \varepsilon, \ \ \textrm{ and } \ \
       \limsup\limits_{n\to\infty} |P^nf(y)| \leqslant \varepsilon, \ \forall y\in K^\delta. \]
Hence, using Lemma \ref{lemAsyExitProb} and Fatou's lemma yields
  \[ \limsup\limits_{n\to \infty} |P^nf(x)| \leqslant \limsup\limits_{n\to \infty} \int_{X_\mu^\delta} |P^{n-1}f(y)| P(x,dy) +  \lim\limits_{n\to \infty} P^n(x,X-X_\mu^\delta) \leqslant 2\varepsilon. \]
The asymptotic stability on $X$ is proved.
\end{proof}

\bigskip

\subsection{Proof of Theorem \ref{thmAsyStaGlo}}
\begin{lemma} \label{lemEnterProb}
Under the same conditions as in Theorem \ref{thmAsyStaGlo}, for all $x\in X$
  \[ \lim\limits_{n\to \infty} P^n(x,X_\mu) = 1. \]
\end{lemma}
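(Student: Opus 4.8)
The goal is to prove Lemma~\ref{lemEnterProb}: under the hypotheses of Theorem~\ref{thmAsyStaGlo}, $\lim_{n\to\infty} P^n(x,X_\mu)=1$ for every $x\in X$. The plan is to combine the three conditions $(\mathbf{A}1)$--$(\mathbf{A}3)$ with the already-established asymptotic stability on $X_\mu$ and the eventual continuity there. The rough strategy is: use $(\mathbf{A}3)$ to trap most of the mass of $P^{n_i}(x,\cdot)$ in a bounded set $B$; use $(\mathbf{A}2)$ to push a uniform-over-$B$ amount of mass into a small neighborhood $O_z$ of the inner point $z\in X_\mu$ in a bounded number of steps; since $z$ is an inner point of $X_\mu$, a neighborhood $O_z$ sits inside $X_\mu$, and by Lemma~\ref{lemFellInv} (invariance of $X_\mu$) once mass is in $X_\mu$ it stays there; finally feed this back to show that the ``escaped'' mass $P^n(x,X-X_\mu)$ cannot stay bounded away from $0$.

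More concretely, I would argue by contradiction: suppose $\limsup_{n\to\infty} P^n(x, X-X_\mu) = \beta > 0$ for some $x$. Take $f\in\mathrm{C_b}$ with $0\le f\le 1$, $f=1$ on $X-X_\mu^{\varepsilon}$-type set and $f$ supported off a neighborhood of $X_\mu$ (using that $z$ is inner to choose $O_z\subset X_\mu$, so we may as well work with $P^n(x,X-X_\mu)$ directly, or pass to $X-X_\mu^\delta$ and let $\delta\downarrow 0$). First apply $(\mathbf{A}3)$: for the given $x$ and small $\varepsilon>0$ there is a bounded $B$ and $n_i\to\infty$ with $\liminf_i P^{n_i}(x,B)\ge 1-\varepsilon$. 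Next apply $(\mathbf{A}2)$ to this bounded set $B$ and to a neighborhood $O_z\subset X_\mu$ of the inner point $z$: there is $\eta>0$ so that for each $y\in B$ there is $k=k(y)$ with $P^{k}(y,O_z)\ge\eta$. The subtlety is that $k(y)$ depends on $y$; I would handle this by a standard ``stopping/absorbing'' argument — partition the relevant time scale, or more cleanly observe that $O_z\subset X_\mu$ and $X_\mu$ is absorbing (Lemma~\ref{lemFellInv}), so it suffices to show that from each $y\in B$ a uniformly positive amount of mass enters $X_\mu$ within a controlled number of steps, and then mass in $X_\mu$ never leaves. Iterating the ``each visit to $B$ loses at least $\eta$ of the exterior mass to the absorbing set $X_\mu$'' inequality forces the exterior mass to $0$ along the $n_i$, hence $\beta\le\varepsilon$; letting $\varepsilon\downarrow 0$ gives $\beta=0$.

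To make the iteration rigorous I would set $\gamma := \sup_{x\in X}\limsup_{n\to\infty} P^n(x,X-X_\mu)$ (a quantity over all starting points, mirroring the proof of Lemma~\ref{lemAsyExitProb}), assume $\gamma>0$, pick $x_0$ nearly attaining it, and then run: by Chapman--Kolmogorov $P^{n}(x_0,X-X_\mu)=\int P^{n-m}(y,X-X_\mu)\,P^m(x_0,dy)$; split the integral over $B$ and $X-B$ using $(\mathbf{A}3)$ to make the $X-B$ part $\le\varepsilon$; on $B$, use $(\mathbf{A}2)$ plus the absorbing property to bound $\limsup_{n} P^{n}(y,X-X_\mu)\le (1-\eta)\gamma$ for $y\in B$ (this is where one needs that entering $O_z\subset X_\mu$ is permanent, so at least $\eta$-fraction of the mass is forever inside, contributing $0$ to the exterior limsup, while the rest contributes at most $\gamma$). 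Combining, $\gamma(1-\tfrac12\eta)\le$ (something) $\le (1-\eta)\gamma+\varepsilon$, a contradiction for small $\varepsilon$, forcing $\gamma=0$.

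The main obstacle is the non-uniformity of the hitting time $k(y)$ in $(\mathbf{A}2)$: one cannot directly iterate at a fixed time horizon. The cleanest fix is to exploit that $O_z$ is a subset of the absorbing set $X_\mu$ so the exact time of entry is irrelevant — what matters is that a uniform $\eta$-fraction of mass is eventually (and hence permanently) captured in $X_\mu$ — so I expect to phrase the key estimate as $\limsup_{n\to\infty} P^n(y, X-X_\mu)\le (1-\eta)$ for all $y\in B$, derived from $\liminf_{n} P^n(y,X_\mu)\ge\limsup_k P^k(y,O_z)\ge\eta$ together with $P^n(\cdot,X_\mu)$ being nondecreasing in $n$ by invariance. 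A secondary technical point is the interplay with $\delta$-neighborhoods $X_\mu^\delta$ versus $X_\mu$ itself; since $z$ is an \emph{inner} point $(\mathbf{A}1)$, working with $O_z\subset X_\mu$ and the closed-set/Feller approximations as in Lemma~\ref{lemAsyExitProb} sidesteps most of this. Once $\lim_n P^n(x,X_\mu)=1$ is in hand, Theorem~\ref{thmAsyStaGlo} itself follows exactly as Theorem~\ref{thmAsyStaGloNew} did from Lemma~\ref{lemAsyExitProb}.
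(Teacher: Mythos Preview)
Your approach mirrors the proof of Lemma~\ref{lemAsyExitProb}, but that argument relies crucially on the uniformity of $\eta$ over all starting points (assumption~(2) of Theorem~\ref{thmAsyStaGloNew}). Here $(\mathbf{A}2)$ only supplies $\eta=\eta_B$ depending on the bounded set $B$, and $B$ in turn comes from $(\mathbf{A}3)$ applied to the particular point $x_0$. This creates a genuine circularity: you want to choose $x_0$ so that $\limsup_n P^n(x_0,X\setminus X_\mu)\ge\gamma(1-\tfrac12\eta)$, but $\eta$ is only determined \emph{after} $x_0$ (hence $B$) is fixed, and nothing prevents $\eta_B\to 0$ along a sequence of $x_0$'s approaching the supremum $\gamma$. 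Your alternative ``iteration'' for a fixed $x$ runs into the dual problem: one pass yields only $\varphi(x)\ge\eta(1-\varepsilon)$, and you cannot iterate because $(\mathbf{A}3)$ is assumed only for the initial $x$, not for the intermediate points reached from $B$.

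The paper's resolution is to abandon the supremum over $x$ and work with the pointwise limit $\varphi(x)=\lim_n P^n(x,X_\mu)$ (which exists by the monotonicity you note). The extra ingredient is that $\varphi$ is $P$-harmonic: $\varphi=P^n\varphi$ for every $n$. One then splits the integral $\varphi(x)=P^{n_i}\varphi(x)$ not over $B$ versus $X\setminus B$, but over $X_\mu$ (where $\varphi=1$) versus $B\setminus X_\mu$ (where $\varphi\ge\beta_B>0$). This gives
\[
  \varphi(x)\ \ge\ P^{n_i}(x,X_\mu)+\beta_B\bigl(P^{n_i}(x,B)-P^{n_i}(x,X_\mu)\bigr),
\]
and letting $i\to\infty$ turns the right side into $\varphi(x)+\beta_B\bigl(1-\varepsilon-\varphi(x)\bigr)$, forcing $\varphi(x)\ge 1-\varepsilon$ with no uniformity in $x$ required. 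Incidentally, your shortcut for the lower bound $\varphi\ge\eta_B$ on $B$ (apply $(\mathbf{A}2)$ directly with $O_z\subset X_\mu$, then use absorption) is cleaner than the paper's route through eventual continuity and the auxiliary neighborhoods $U,V,V_m$; it is only the endgame that needs the harmonic-function split rather than the $\gamma=\sup_x$ contraction.
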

\begin{proof}
By $(\mathbf{A}1)$, let $U\subset X_\mu$ be a neighborhood containing $z$. Take $f\in \mathrm{C_b}$ such that $0\leqslant f\leqslant \mathbf{1}_U$ and $P^n(z,U)\geqslant P^nf(z) \geqslant \mu(f)/2 > 0$ for big $n$ due to the weak convergence on $X_\mu$. Then the eventual continuity yields another neighborhood $V$ of $z$ with $\limsup\limits_{n\to \infty} |P^nf(x) - P^nf(z)| \leqslant \mu(f)/4$ for all $x\in V$. So this estimate gives us an increasing sequence of closed subset $V_m$ such that $V_m \uparrow V$ and
   \[ P^n(x,U) \geqslant P^nf(x) \geqslant P^nf(z) - \mu(f)/4 \geqslant \mu(f)/4 \]
for all $n\geqslant m$ and all $x\in V_m$.

On the other hand, $(\mathbf{A}2)$ says for any bounded set $A$ and $V$, there exists $\eta_A>0$ such that for any $x\in A$, there exists $k$ with $P^k(x,V) \geqslant \eta_A$. Choose $m$ (depending on $x$) with $P^k(x,V_m) \geqslant \eta_A/2$, we have due to $U\subset X_\mu$ that for all $n\geqslant m$
  \beqn
    \ \ \ \ \ \ \ P^{n+k}(x,X_\mu) \geqslant P^{n+k}(x,U) \geqslant \int_{V_m} P^n(y,U) P^k(x,dy) \geqslant \mu(f) \eta_A/8 =: \beta_A. \label{eqYUV}
  \eeqn

Using Lemma \ref{lemFellInv}, define an sequence of monotone functions
  \[ \varphi_n(x) := P^n(x,X_\mu) = \int_{X_\mu} P^m(y,X_\mu) P^n(x,dy) \leqslant P^{m+n}(x,X_\mu) = \varphi_{m+n} (x), \]
which implies $\varphi(x) := \lim\limits_{n\to \infty} \varphi_n(x)$ exists. In particular, $\varphi = 1$ on $X_\mu$ and $\varphi \geqslant \beta_A$ on $A$ by (\ref{eqYUV}). The definition gives also $\varphi_{m+n} = P^n \varphi_m$,
which implies $\varphi = P^n\varphi$ by the monotone convergence theorem.

Assume $\varphi(x)<1$ for some $x\notin X_\mu$, we set $\varepsilon = (1-\varphi(x))/2$. By $(\mathbf{A}3)$, there exists a bounded $B$ such that $P^{n_i}(x,B) \geqslant 1-\varepsilon$ for a sequence of $n_i$. Moreover, by the above discussion, $\varphi(y) \geqslant \beta_B$ for all $y\in B$. Then the invariance yields
  \beq
    \varphi(x) \ =\ P^{n_i}\varphi(x)
      &\geqslant& \int_{X_\mu \cup (B-X_\mu)} \varphi(y) P^{n_i}(x,dy) \\
      &\geqslant& P^{n_i}(x,X_\mu) + \beta_B \cdot (P^{n_i}(x,B) - P^{n_i}(x,X_\mu)) \\
      &\geqslant& \varphi_{n_i}(x) + \beta_B \cdot (1-\varepsilon - \varphi_{n_i}(x)) \\
      &\overset{i\to \infty}{\longrightarrow}& \varphi(x) + \beta_B \cdot(1-\varepsilon - \varphi(x)) \ >\ \varphi(x),
  \eeq
which contradicts oneself. Hence, $\varphi\equiv 1$ on the whole $X$.
\end{proof}

Now, let's prove Theorem \ref{thmAsyStaGlo}.

\begin{proof}
Let $f\in \mathrm{C_b}$ with $|f|\leqslant 1$ and $\mu (f) =0$. Then we have for all $x\in X$
  \[ \lim\limits_{n\to \infty} |P^nf(x)| \leqslant \limsup\limits_{n\to \infty} \int_{X_\mu} |P^{n-1}f(y)| P(x,dy) +  \lim\limits_{n\to \infty} P^n(x,X-X_\mu) = 0 \]
by using the weak convergence on $X_\mu$ and Lemma \ref{lemEnterProb}.
\end{proof}

\bigskip

\section{Stochastic 2D Navier-Stokes equations revisited}
\label{2DNS}

To get the unique ergodicity, it suffices to check two assumptions in Theorem \ref{thmFeller2}. Set $X$ to be a Banach space, $z=0$, and $w_t$ the associated stochastic process on $X$ with $w_0 =z$. Hairer and Mattingly \cite[Lemma A.1]{Hairer} gives a prior estimate ($\eta > 0, C > 0$)
   \beqn
     \mathbb{E}\exp(\eta ||w_t||^2) \leqslant C\exp(\eta e^{-\nu t} ||w_0||^2),\ \ \forall t>0,\label{eqHairerPrior}
   \eeqn
which implies by the Chebyshev inequality that for any ball $B(z,R)$ and all time $t$
   \[ P_t(z,B(z,R)^c) \leqslant e^{-\eta R^2}\mathbb{E}\exp(\eta ||w_t||^2 \mathbf{1}_{||w_t|| \geqslant R}) \leqslant Ce^{-\eta R^2},\]
and thus $P_t(z,B(z,R)) \geqslant \frac12$ when $R$ is big.

Combining with E and Mattingly \cite[Lemma 3.1]{Mattingly} that for every $\gamma > 0$ there exists
a time $T_\gamma$ such that
   \beqn
      \inf\limits_{w\in B(z,R)} P^{T_\gamma}(w, B(z,\gamma)) > 0,\label{eqMattingly}
   \eeqn
we have by the semigroup property
   \beq
     \limsup\limits_{t\to \infty} Q^t(z,B(z,\gamma)) &=&
        \limsup\limits_{t\to \infty} \int P^{T_\gamma}(w,B(z,\gamma)) Q^t(z,dw)\\  &\geqslant&
          \limsup\limits_{t\to \infty} \int_{B(z,R)} P^{T_\gamma}(w,B(z,\gamma)) Q^t(z,dw) \ >\ 0.
   \eeq
This gives the lower bound condition $(\mathcal{L}_S)$.

Moreover, the gradient estimate in \cite[Proposition 4.3]{Hairer} reads
   \beqn
    |\nabla P_t\varphi(w)| \leqslant C\exp(\eta ||w_t||^2) (||\varphi||_\infty + e^{-\delta t}||\nabla\varphi||_\infty ), \ \ \forall t>0, \label{eqGradientHairer}
   \eeqn
which implies that $P_t$ is equicontinuous, and thus eventually continuous. Therefore, we get the existence of ergodic measures. The uniqueness follows from the eventual continuity and weak irreducibility as explained in our introduction. Note that, the weak irreducibility still follows from (\ref{eqHairerPrior}) and (\ref{eqMattingly}).

To check the asymptotical stability on $X$, it is sufficient to show that $z$ is aperiodic and Assumption (2) in Theorem \ref{thmAsyStaGloNew} holds, which can both be quickly derived from (\ref{eqHairerPrior}) and (\ref{eqMattingly}) too.

We remark that, (\ref{eqGradientHairer}) is a crucial ingredient in \cite{Hairer}, a very hard and very powerful estimate in the literature of stochastic 2D Navier-Stokes equations. However, if one is concerned only to the unique ergodicity and asymptotic stability, the contraction factor $e^{-\delta t}$ there will not be used.

And we have to admit that, it is indeed more interesting to find some new SPDEs which only hold weak gradient estimates, formally like (\ref{eqGradientWeak}), to exhibit fully the effectiveness of our criteria presented in this paper. But at least, we provide such a possibility to establish the ergodic theory for more complicated stochastic models on infinite dimensional spaces.

\bigskip

\subsection*{Acknowledgements}

{\small It is a great pleasure to thank all the members of our seminar for discussions. In particular, Dr. Yong-Sheng Song proposed firstly the fact in Proposition \ref{propSepa}. The authors thank the financial support from Key Laboratory of Random Complex Structures and Data Science, Academy of Mathematics and Systems Science, Chinese Academy of Sciences (No. 2008DP173182). Respectively, Fu-Zhou Gong is also supported by NSFC (no. 11021161) and 973 Program (no. 2011CB808000), and Yuan Liu supported by NSFC (no. 11201456) and CAS grant (no. Y129161ZZ1).}


\end{document}